\documentclass[12pt]{article}
\usepackage[colorlinks]{hyperref}%The following 10 commandfs are for making a link to equation's number and citations
\usepackage{color}
\usepackage{graphicx}
\usepackage{graphics}
\usepackage{makeidx}
\usepackage{showidx}
\usepackage{latexsym}
\usepackage{amssymb}
\usepackage{verbatim}
\usepackage{amsmath}
\usepackage{amsthm}
\usepackage{amsfonts}
\usepackage{amssymb,amsmath}
\usepackage{latexsym,amsthm,amscd}
\usepackage[all]{xy}
\newtheorem{them}{Theorem}[section]
\newtheorem{prop}{Proposition}[section]
\newtheorem{defn}{Definition}[section]%If we dont put[thm] the numbering is done independently
\newcounter{alphthm}
\setcounter{alphthm}{0}

\newtheorem{Lemma}[alphthm]{Lemma}
\newtheorem{propriete}[alphthm]{Theorem}

\numberwithin{equation}{section}
%%%%%%%%%%%%%%%%%%%%%%%%%%%%%%%%%%%%%%%%%%%%%%
%following commands is for changing number of equations within sections or subsection.
%\renewcommand{\theequation}{\arabic{chapter}%.\arabic{subsection}
%{\arabic{section}%.\arabic{subsection}
%.\arabic{equation}}
%%%%%%%%%%%%%%%%%%%%%%%%%%%%%%%%%%%
\newtheorem{cor}{Corollary}[section]

\newtheorem{rem}{Remark}[section]

\newtheorem{thm}{Theorem}
\newtheorem{lem}{Lemma}[section]
%New Commands for Abreviation================================
\newcommand{\be}{\begin{equation}}
\newcommand{\ee}{\end{equation}}
\newcommand{\ben}{\begin{enumerate}}
\newcommand{\een}{\end{enumerate}}
\newcommand{\beq}{\begin{eqnarray}}
\newcommand{\eeq}{\end{eqnarray}}
\newcommand{\beqn}{\begin{eqnarray*}}
\newcommand{\eeqn}{\end{eqnarray*}}

\newcommand{\bpf}{\begin{proof}}
\newcommand{\epf}{\end{proof}}
\newcommand{\bl}{\begin{lem}}
\newcommand{\el}{\end{lem}}
\newcommand{\bp}{\begin{prop}}
\newcommand{\ep}{\end{prop}}
\newcommand{\bd}{\begin{defn}}
\newcommand{\ed}{\end{defn}}
\newcommand{\bt}{\begin{thm}}
\newcommand{\et}{\end{thm}}

\def\nn{\nonumber}

%=======================================
\newcommand\bpr{\begin{prop}}
\newcommand\epr{\end{prop}}
%%%%%%%%%%%%%%%%%%%%%%%%%%%%%%%%%%%%%%%%%%%%%%%

%%%%%%%%%%%%%%%%%%%%%%%%%%%%%%%%%%%%%%%%%%%%%%%
%%%%%%%%%%%%%%%%%%%%%%%%%%%%%%%%%%%%%%%%%%%%%%%
 
\title{Hamilton's Ricci Flow on Finsler Spaces} 
\author{B. Bidabad\thanks{The corresponding author, bidabad@aut.ac.ir}\,\,and\,M. K. Sedaghat}
\date{}

\begin{document}
\maketitle
\noindent

\begin{abstract}
Recently, we have studied evolution of a family of Finsler metrics along Finsler Ricci flow and proved its convergence in short time. Here, existence of solutions to the so called Hamilton Ricci flow on Finsler spaces is studied and a short time solution is found. To this end the Finslerian Ricci-DeTurck flow on Finsler spaces is defined and existence of its solution in short time is proved. Next, this solution is pulled back to determine a short time solution to the Hamilton Ricci flow on underlying  Finsler space.
 \end{abstract}
\vspace{.5cm}
{\footnotesize\textbf{Keywords:} Hamilton Ricci flow, Ricci-DeTurck flow, parabolic differential equation, sphere bundle. }\\
{\footnotesize\textbf{AMS subject classification}: {53C60, 53C44}}
%%%%%%%%%%%%%%%%%%%%%%%%%%%%%%%%%%%%%%%%%%%%%%%%%%%%%%%%%%%%%%%%%%%%%%%%%%%%%%%%%%%%%%%%%%%%%%%%%%%%%%%%%%%%
\section{Introduction}
The major aim for many geometric flows is to produce canonical geometric structures by deforming rather general initial data to these structures. The Ricci flow theory and its various applications became one of the most intensively developing branch of modern mathematics \cite{Cao,Chow2,MG}. The most fabulous achievement of this theory was the proof of Thurston's geometrization conjecture by G. Perelman \cite{Pr1,Pr2}. The subject of Hamilton's Ricci flow introduced in 1982 lies in the field of geometric flows which has many applications in physics and real world problems. Hamilton introduced the Ricci flow by the differential equation
\be \label{0}
\frac{\partial}{\partial t}g_{ij}(x,t)=-2Ric_{ij},\hspace{0.5cm}g_{(t=0)}=g_{0},\hspace{0.5cm}\forall x\in M,
\ee
in order to study compact three-manifolds with positive Ricci curvature. The Ricci flow, which evolves a Riemannian metric by its Ricci curvature, is a natural analogue of the heat equation for metrics. Ricci flow conformally deforms the Riemannian metric to its induced curvature, such that the curvature tensors evolve by a system of diffusion equations which leads to distribute the curvature uniformly over the manifold. Hence, one expects that the initial metric should be improved and evolved into a canonical metric, in order to obtain some information on the topology of the underlying manifold. In his celebrated paper, Hamilton proved that there is a unique solution to the Ricci flow for an arbitrary smooth Riemannian metric on a closed manifold over a sufficiently short time \cite{Ham1}. He also showed that on a compact three-manifold with an initial metric having positive Ricci curvature, the Ricci flow converges, after re-scaling to keep constant volume, to a metric of positive constant sectional curvature, proving the manifold is diffeomorphic to the three-sphere $\mathbb{S}^{3}$ or a quotient of the three-sphere $\mathbb{S}^{3}$ by a linear group of isometries.\\
In Finsler geometry the problem of constructing the Finslerian Ricci flow contains a number of new conceptual and fundamental issues on compatibility of geometrical and physical objects and their optimal configurations. In order to define the concept of Ricci tensor in Finsler geometry, Akbar-Zadeh in his celebrated work has used Einstein-Hilbert's functional in general relativity and introduced definition of Einstein-Finsler spaces as critical points of this functional, similar to the Hamilton's work, in Riemannian geometry, see \cite{Ham1}. This definition of Ricci tensor turned to be one of the most productive definition in Finsler geometry. In \cite{Bao}, D. Bao based on the Akbar-Zadeh's Ricci tensor and in analogy with the Ricci flow in Riemannian case, has considered the following equation as Ricci flow in Finsler geometry
\begin{eqnarray*}
\frac{\partial}{\partial t}\log F=-\mathcal{R}ic,\qquad F_{(t=0)}=F_{0},
\end{eqnarray*}
where, $F_{0}$ is the initial Finsler structure. This equation is equivalent to
\begin{eqnarray*}
\frac{\partial}{\partial t}g_{ij}(x,y,t)=-2Ric_{ij},\hspace{0.5cm}g_{(t=0)}=g_{0},\hspace{0.5cm}(x,y)\in TM,
\end{eqnarray*}
which is analogous to the Riemannian Ricci flow (\ref{0}) and addresses the evolution of the Finsler structure $F$. This definition of Ricci flow seems to make sense, as an un-normalized Ricci flow for Finsler spaces on both the manifolds of nonzero tangent vectors $TM_{0}$ and the sphere bundle $SM$, which is also used in \cite{SL} to study Harnack estimates for heat equation.\\

Recently, we have studied Finsler Ricci solitons as a self similar solutions to the Finsler Ricci flow and it was shown if there is a Ricci soliton on a compact Finsler manifold then there exists a solution to the Finsler Ricci flow equation and vice-versa, see \cite{BY}.
 Next, as a first step to answer  Chern's question stating that weather there exists a Finsler-Einstein metric on every smooth manifold, we have considered evolution of a family of Finsler metrics, first under a general flow next under Finsler Ricci flow and prove that a family of Finsler metrics $g(t)$ which are solutions to the Finsler Ricci flow converge to a smooth limit Finsler metric as $t$ approaches the finite time $T$, see \cite{YB2}.
Moreover, a Bonnet-Myers type theorem was studied and it is proved that on a Finsler space, a forward complete shrinking Ricci soliton is compact if and only if the corresponding vector field is bounded, using which we have shown a compact shrinking Finsler Ricci soliton has finite fundamental group and hence the first de Rham cohomology group vanishes, see \cite{YB1}. 

In the present work, a new fundamental step is taken in the study of any system of evolutionary partial differential equations which shows short-time existence of Hamilton Ricci flow on Finsler spaces. More intuitively, as the Ricci flow quasi-linear differential equation is not parabolic, one cannot apply the standard theory to conclude existence and uniqueness of a short time solution. Following the procedure described by D. DeTurck in Riemannian space \cite{DeT}, the Finslerian Ricci flow is modified and a PDE which has a short-time solution is obtained. Next, a solution to the original Ricci flow equation is found by pulling back solution of the modified flow using appropriate diffeomorphisms.
%%%%%%%%%%%%%%%%%%%%%%%%%%%%%%%%%%%%%%%%%%%%%%%%%%%%%%%%%%%%%%%%%%%%%%%%%%%%%%%%%%%%%%%%%%%%%%%%%%%%%%%%%%%%%%%%%%%
\section{Preliminaries and notations}

\subsection{Finsler space and Cartan connection}
Let $M$ be a real n-dimensional manifold of class $C^{\infty}$. We denote by $TM$ the tangent bundle of tangent vectors,  by  $\pi :TM_{0}\longrightarrow M$ the fiber bundle of non-zero tangent vectors and  by $\pi^*TM\longrightarrow TM_0$ the pulled-back tangent bundle.
Let $(x,U)$ be a local chart on  $ M$ and $(x^i,y^i)$ be the induced local coordinates on $\pi^{-1}(U)$.  A \emph{Finsler structure} on M is a function $F: TM \longrightarrow [0,\infty )$, with the following properties:(i) $F$ is differentiable $C^{\infty}$ on $TM_{0}$; (ii) $F$ is positively homogeneous of degree one in $y$, that is,  $F(x,\lambda y)=\lambda F(x,y)$, for all $\lambda >0$;  (iii) The Finsler metric tensor $g$ defined by the Hessian matrix of $F^{2}$, $(g_{ij})=(\frac{1}{2}[\frac{\partial^{2}}{\partial y^{i}\partial y^{j}}F^{2}])$, is positive definite on $TM_{0}$. A \emph{Finsler manifold} is a pair $(M,F)$ consisting of a differentiable manifold $M$ and a Finsler structure $F$ on $M$. Here and everywhere in this paper all the Latin indices $i,j,...,a,b,...,p,q,...$ run over the range $1,...,n$.
%%%%%%%%%%%%%%%%%%%%%%%
Any point of $TM_0$ is denoted
by $z=(x,y)$, where $x=\pi z\in M$ and $y\in T_{\pi z}M$. We denote by $TTM_0$, the tangent bundle of $TM_0$ and by $\varrho$, the canonical
linear mapping $\varrho:TTM_0\longrightarrow \pi^*TM,$ where, $ \varrho=\pi_*$. For all $z\in TM_0$, let $V_zTM$ be the set of vertical vectors at $z$, that is, the set of vectors which are tangent to the fiber through $z$. Equivalently, $V_zTM=\ker \pi_*$
where $\pi_*:TTM_0\longrightarrow TM$ is the linear tangent mapping.\\
Let $\nabla$ be a linear connection on the vector bundle $\pi^*TM\longrightarrow TM_0$. Consider the linear mapping
$\mu:TTM_0\longrightarrow \pi^*TM,$ by $\mu(\hat{X})=\nabla_{\hat{X}}{\bf y}$ where, $\hat{X}\in TTM_0$ and ${\bf y}$ is the canonical section of $\pi^*TM$.
The connection $\nabla$ is said to be {\it regular}, if $\mu$ defines an isomorphism between $VTM_0$ and
$\pi^*TM$. In this case, there is the horizontal distribution $HTM$ such that we have the Whitney sum $TTM_0=HTM\oplus VTM.$ This decomposition permits to write a vector field $\hat{X}\in \chi(TM_0)$ into the horizontal and vertical form $\hat{X}=H\hat{X}+V\hat{X}$ uniquely. In the sequel, we denote all vector fields on $TM_0$ by $\hat{X}, \hat{Y}$, etcetera and the corresponding sections of $\pi^*TM$ by $X=\varrho(\hat X)$, $Y=\varrho(\hat Y)$, respectively, unless otherwise specified.
The structural equations of the regular connection $\nabla$ are given by:
\begin{eqnarray*}
&&\tau(\hat{X},\hat{Y})=\nabla_{\hat{X}}Y-\nabla_{\hat{Y}}X-\varrho[\hat{X},\hat{Y}],\\
&&\Omega(\hat{X},\hat{Y})Z=\nabla_{\hat{X}}\nabla_{\hat{Y}}Z-\nabla_{\hat{Y}}\nabla_{\hat{X}}Z
-\nabla_{[\hat{X},\hat{Y}]}Z,
\end{eqnarray*}
where, $X=\varrho(\hat{X})$, $Y=\varrho(\hat{Y})$, $Z=\varrho(\hat{Z})$ and $\hat{X}$, $\hat{Y}$ and $\hat{Y}$ are vector fields on $TM_0$. The torsion tensor $\tau$ and curvature tensor $\Omega$ of $\nabla$ determine two torsion tensors denoted here by $S$ and $T$ and three
curvature tensors denoted by $R$, $P$ and $Q$ defined by:
\begin{eqnarray*}
&S(X,Y)=\tau(H\hat{X},H\hat{Y}),&\ \ \ T(\dot{X},Y)=\tau(V\hat{X},H\hat{Y}),\\
&R(X,Y)=\Omega(H\hat{X},H\hat{Y}),&\ \ \ P(X,\dot{Y})=\Omega(H\hat{X},V\hat{Y}),\\
&Q(\dot{X},\dot{Y})=\Omega(V\hat{X},V\hat{Y}),&\nonumber
\end{eqnarray*}
where, $X=\varrho(\hat{X})$,\ $Y=\varrho(\hat{Y})$,\
$\dot{X}=\mu(\hat{X})$ and $\dot{Y}=\mu(\hat{Y})$. The tensors $R$, $P$ and $Q$ are called $hh-$, $hv-$ and $vv-$curvature tensors, respectively.
There is a unique regular connection associated with $F$ called {\it Cartan connection}  such that:
\beq\label{CartanConProperties}
&& \nabla_{\hat{Z}}g=0,\nonumber\\
&& S(X,Y)=0,\nonumber\\
&& g(\tau(V\hat{X},\hat{Y}),Z)=g(\tau(V\hat{X},\hat{Z}),Y),
  \eeq
where, $X=\varrho(\hat{X})$, $Y=\varrho(\hat{Y})$ and $Z=\varrho(\hat{Z})$, for all $\hat{X}$, $ \hat{Y}$,
$\hat{Z}\in TTM_0$, see \cite{AZ}. Given an induced natural coordinates on $\pi^{-1}(U)$,
%%%%%%%%%%%
we denote by $G^{i}$ the components of spray vector field on $TM$, where $G^{i}=\frac{1}{4}g^{ih}(\frac{\partial^{2}F^{2}}{\partial y^{h}\partial x^{j}}y^{j}-\frac{\partial F^{2}}{\partial x^{h}})$, and the \emph{formal} \emph{Christoffel} \emph{symbols} by $\gamma^{i}_{jk}=\frac{1}{2}g^{ih}(\partial_{j}g_{hk}+\partial_{k}g_{jh}-\partial_{h}g_{jk})$. The horizontal and vertical subspaces have the corresponding bases $\{\frac{\delta}{\delta {x^i}},\frac{\partial}{\partial y^{i}}\}$, which are related to the typical bases of $TM$ $\{\frac{\partial}{\partial x^{i}},\frac{\partial}{\partial y^{i}}\}$, by $\frac{\delta}{\delta {x^i}}:=\frac{\partial}{\partial x^{i}}-G_{i}^{j}\frac{\partial}{\partial y^{j}}$. The dual bases of the former basis are denoted by $\{dx^{i},\delta y^{i}\}$, where $\delta y^{i}:=dy^{i}+G_{j}^{i}dx^{j}$. The 1-form of Cartan connection in these bases is given by $\omega^{i}_{j}=\Gamma^{i}_{jk}dx^{k}+C^{i}_{jk}\delta y^{k}$, where $\Gamma^{i}_{jk}=\frac{1}{2}g^{ih}(\delta_{j}g_{hk}+\delta_{k}g_{jh}-\delta_{h}g_{jk})$, $C^{i}_{jk}=\frac{1}{2}g^{ih}\dot{\partial}_{h}g_{jk}$, $\delta_{k}=\frac{\delta}{\delta x^{k}}$ and $\dot{\partial}_{k}=\frac{\partial}{\partial y^{k}}$. By homogeneity, we have $y^{k}\Gamma^{i}_{jk}=N^{i}_{j}$, where $N_{j}^{i}=\frac{1}{2}\frac{\partial G^{i}}{\partial  y^{j}}$ and $y^{j}N^{i}_{j}=2G^{i}$, see \cite{BCS}. The horizontal and vertical metric compatibility of  Cartan connection  in local coordinates is given by $\nabla_{l}g_{jk}=0$ and $\dot{\nabla}_{l}g_{jk}=0$ respectively. In local coordinates, coefficients of the Cartan connection $\nabla$ are given by
$$\nabla_{k}\dot{\partial}_{j}=\Gamma^{i}_{jk}\dot{\partial}_{i},\hspace{0.4cm}
\dot{\nabla}_{k}\dot{\partial} _{j}=C^{i}_{jk}\dot{\partial}_{i},\hspace{0.4cm}
\nabla_{k}\delta_{j}=\Gamma^{i}_{jk}\delta_{i},\hspace{0.4cm} \dot{\nabla}_{k}\delta_{j}=C^{i}_{jk}\delta_{k}.$$
The components of Cartan horizontal and vertical covariant derivatives of a Finslerian $(1,2)$ tensor field $S$ on $\pi^{*}TM$ with the components $(S^{i}_{jk}(x,y))$ on $TM$ are given by
\begin{equation} \label{1}
\nabla_{l}S^{i}_{jk}:= \delta_{l}S^{i}_{jk}-S^{i}_{s k}\Gamma^{s}_{jl}-S^{i}_{js}\Gamma^{s}_{kl}+S^{s}_{jk}\Gamma^{i}_{s l},
\end{equation}
\begin{equation}\label{2}
\dot{\nabla}_{l}S^{i}_{jk}:=\dot{\partial}_{l}S^{i}_{jk}-S^{i}_{s k}C^{s}_{jl}-S^{i}_{js}C^{s}_{kl}+S^{s}_{jk}C^{i}_{s l},
\end{equation}
respectively, where, $\nabla_{l}:=\nabla_{\frac{\delta}{\delta x^l}}$ and $\dot{\nabla}_{l}:=\nabla_{\frac{\partial}{\partial y^l}}$.
We denote the components of Cartan hh-curvature tensor by
\begin{equation} \label{77}
R^{i}_{jkm}=\delta_{k}\Gamma^{i}_{jm}-\delta_{m}\Gamma^{i}_{jk}+
\Gamma^{i}_{s k}\Gamma^{s}_{jm}-\Gamma^{i}_{s m}\Gamma^{s}_{jk}
+R^{s}_{km}C^{i}_{s j},
\end{equation}
where, $R^{s}_{km}$ is equal to $y^{p} R^{s}_{ pkm}$.\\
Here, we consider also the reduced curvature tensor $R^{i}_{k}$ which is expressed entirely in terms of $x$ and $y$ derivatives of spray coefficients $G^{i}$ as follows, see \cite{BCS}
\begin{equation} \label{18}
R^{i}_{k}:=\frac{1}{F^2}(2\frac{\partial G^{i}}{\partial x^{k}}-\frac{\partial^{2}G^{i}}{\partial x^{j}\partial y^{k}}y^{j}+2G^{j}\frac{\partial^{2}G^{i}}{\partial y^{j}\partial y^{k}}-\frac{G^{i}}{\partial y^{j}}\frac{G^{j}}{\partial y^{k}}).
\end{equation}
Let  $c:I\longrightarrow M$ be an oriented $C^{\infty}$ parametric curve on $(M,F)$ with the parametric equation $x^{i}(t)$. Let $(x(t),\dot{x}(t))$ be the line element along the curve $c$ and $X$ a $C^{\infty}$ vector field $X=X^{i}(t)\frac{\partial}{\partial x^{i}}\mid_{c(t)}$ along $c(t)$. We denote the Cartan covariant derivative of $X$ in direction of $\dot{c}=\frac{dx^{j}}{dt}\frac{\partial}{\partial x^{j}}$ by $\nabla_{\dot{c}}X=\frac{\delta X^{i}}{dt}\frac{\partial}{\partial x^{i}}\mid_{c(t)}$, see \cite{BS}, where
\begin{equation} \label{8}
\frac{\delta X^{i}}{dt}=\frac{dX^{i}}{dt}+(\Gamma^{i}_{kh}+C^{i}_{ks}N^{s}_{h})X^{k}\frac{dx^{h}}{dt}.
\end{equation}
By means of metric-compatibility we have,
\begin{eqnarray*}
\frac{d}{dt}g(X,Y)=g(\nabla_{\dot{c}}X,Y)+g(X,\nabla_{\dot{c}}Y).
\end{eqnarray*}
\subsection{On the pull back bundle $p^{*}TM$ over $SM$}
Consider the sphere bundle $SM:=TM/\sim$, where $y\sim y'$ if and only if $y=\lambda y'$ for some $\lambda>0$. Given any $(x,y)\in TM$, we shall denote its equivalence class which is a point in $SM$ by $(x,[y])\in SM$.
%In other words, $SM$ is the bundle of all directions or rays.
The natural projection $p:SM\longrightarrow M$ pulls back the tangent bundle $TM$ to a n-dimensional vector bundle $p^{*}TM$ over the $2n-1$ dimensional base $SM$. Namely, over each point $(x,[y])$ we erect a single copy of $T_{x}M$ and endow it with the inner product $g_{ij}(x,y)dx^{i}\otimes dx^{j}$. The resulting vector bundle $p^{*}TM$ has the fiber dimension $n$ as in $\pi^*TM$, but now it sits over the $2n-1$ dimensional sphere bundle $SM$ rather than $TM$. Local coordinates $x^{1},...,x^{n}$ on $M$ induces global coordinates $y^1,...,y^n$ on each fiber $T_{x}M$, through the expansion $y=y^{i}\frac{\partial}{\partial x^i}$. Thus $(x^i;y^i)$ is a coordinate system on $SM$ with the $y^i$ regarded as homogeneous coordinates in the projective space sense.
Given local coordinates $(x^i)$ on $M$,
we shall economize on notation and regard the corresponding collections $\{\frac{\partial}{\partial x^i}\}$, $\{dx^i\}$ as local bases for $p^{*}TM$ and its dual $p^{*}T^{*}M$, respectively. There is an inner product $g$ on $p^{*}TM$ by specification
\begin{eqnarray*}
g:=g_{ij}(x,y)dx^i\otimes dx^{j},
\end{eqnarray*}
on the fiber over the point $(x,[y])\in SM$, where $y\in T_{x}M$. One can verify that $g$ is well defined and
\begin{eqnarray*}
g_{ij}(x,y)y^{i}y^{j}=F^{2}(x,y).
\end{eqnarray*}
Let $\{e_{a}=u_{a}^{i}\frac{\partial}{\partial x^i}\}$ be a local orthonormal frame for $p^{*}TM$ and $\{\omega^a=v^{a}_{i}dx^i\}$ be its co-frame; thus $\omega^{a}(e_{b})=\delta^{a}_{b}$. It is clear that $e_{n}:=l$, where $l$ is the distinguished global section whose value at any $(x,[y])\in SM$ and $\omega^{n}=\frac{\partial F}{\partial y^i}dx^{i}$.
%Here $\frac{\partial}{\partial x^i}$ stands for the section $([y],\frac{\partial}{\partial x^i})$
Also we have $\frac{\partial}{\partial x^{i}}=v^{a}_{i}e_{a}$ and $dx^{i}=u^{i}_{a}\omega^{a}$. A basic relation between $(u^i_a)$ and $(v^a_i)$ is given by $v^a_iu_b^i=\delta^a_b$ and $u^i_av^a_j=\delta^i_j$.
For convenience, we shall also regard the $e_{a}$'s and $\omega^a$'s as local vector fields and 1-forms, respectively on $SM$. All $p^{*}TM$ related indices are raised and lowered with the metric $g$, see \cite{BAOS}.
 Let
\begin{eqnarray*}
&&\hat{e}_{a}=u^{i}_{a}\frac{\delta}{\delta x^{i}},\quad
\hat{e}_{n+a}=u^{i}_{a}F\frac{\partial}{\partial y^i}.\\
&&\omega^{a}=v^{a}_{i}dx^{i},\quad
\omega^{n+a}=v^{a}_{i}\frac{\delta y^i}{F}.
\end{eqnarray*}
It can be shown that $\{\hat{e}_{a},\hat{e}_{n+\alpha}\}$ is a local basis for the tangent bundle $TSM$ and $\{\omega^{a},\omega^{n+\alpha}\}$ is a local basis for the cotangent bundle $T^{*}SM$, where the Latin indices run over the range $1,...,n$ and the Greek indices run from $1$ to $n-1$. Tangent vectors on $SM$ which are annihilated by all $\{\omega^{n+\alpha}\}$'s form the horizontal sub-bundle $HSM$ of $TSM$. The fibers of $HSM$ are $n$-dimensional. On the other hand, let $VSM:=\cup_{x\in M}T(S_{x}M)$ be the vertical sub-bundle of $TSM$; its fibers are $n-1$ dimensional. The decomposition $TSM=HSM\oplus VSM$ holds because $HSM$ and $VSM$ are direct summands.
The inner product on $p^{*}TM$ induces a Riemannian structure $\hat{g}$ on $SM$
\begin{eqnarray*}
\hat{g}:=
\delta_{ab}\omega^{a}\otimes \omega^{b}+\delta_{\alpha\beta}\omega^{n+\alpha}\otimes\omega^
{n+\beta}=
g_{ij}dx^{i}\otimes dx^{j}+\delta_{\alpha\beta}\omega^{n+\alpha}\otimes\omega^
{n+\beta}.
\end{eqnarray*}
In particular, $HSM$ and $VSM$ are orthogonal with respect to $\hat{g}$.
%%%%%%%%%%
\section{Quasilinear strictly parabolic equations on $SM$}
A system of partial differential equations is called \emph{quasilinear} if the derivatives of principal order term occur only linearly (with coefficients which may depend on derivatives of lower order),
see \cite{Rog}. Let $u:M\longrightarrow \mathbb{R}$ be a smooth function on $M$.
A \emph{quasilinear strictly parabolic} equation is a PDE of the form
\begin{eqnarray*}
\frac{\partial u}{\partial t}=a^{ij}(x,t)\frac{\partial^2 u}{\partial x^i\partial x^j}+h(x,t,u,\frac{\partial u}{\partial x^i}),
\end{eqnarray*}
where, $a^{ij}$ and $h$ are smooth functions on $M$ and for some constant $\lambda>0$ we have the parabolicity assumption
\begin{eqnarray*}
a^{ij}(x,t)\xi_{i}\xi_{j}\geq \lambda\parallel \xi\parallel^{2},\quad 0\neq\xi\in\chi(M),
\end{eqnarray*}
that is, all eigenvalues of $A=(a^{ij})$ have positive sign or equivalently $(a^{ij})$ is positive definite.
 Let $\rho:SM\longrightarrow \mathbb{R}$ be a smooth function on the sphere bundle $SM$. We consider the quasilinear strictly parabolic equation on $SM$
\begin{equation}\label{parabolic}
\frac{\partial \rho}{\partial t}=G^{_{AB}}(x,y,t)\hat{e}_{_A}\hat{e}_{_B}\rho+h(x,y,t,\rho,\hat{e}_{_A}\rho),
\end{equation}
where $\hat{e}_{_A}$ is a local basis for the tangent bundle $TSM$ and stand here as partial derivative on $SM$. The capital indices $\footnotesize{A,B}$ run over the range $\{1,2,...,2n-1\}$ and $G^{_{AB}}$ is positive definite. Here, $G^{_{AB}}$ and $h$ are smooth functions on $SM$. More preciesly, a quasilinear strictly parabolic equation on $SM$ can be defined in the following form
\begin{equation}\label{po}
\frac{\partial \rho}{\partial t}=P^{ab}\hat{e}_{a}\hat{e}_{b}\rho+Q^{\alpha\beta}\hat{e}_{n+\alpha}\hat{e}_{n+\beta}\rho+C^{a\alpha}\hat{e}_{a}\hat{e}_{n+\beta}+\textrm{lower order terms},
\end{equation}
where the Latin indices $a,b,...$ and Greek indices $\alpha,\beta,...$ run over the range $1,...,n$ and $1,...,(n-1)$, respectively and the matrix
\begin{displaymath}
G^{_{AB}}=\left(\begin{array}{c|c}
P^{ab} & \frac{1}{2}C^{a\alpha} \\
\hline
\frac{1}{2}C^{\alpha a} & Q^{\alpha\beta}
\end{array}\right)_{(2n-1)\times(2n-1)},
\end{displaymath}
is positive definite.
\begin{lem}\label{mm}
Let $\rho:TM\longrightarrow \mathbb{R}$ be a zero-homogeneouse smooth function on the tangent bundle $TM$. The quasilinear differential equation
\begin{equation}\label{lili}
\frac{\partial \rho}{\partial t}=g^{ij}\frac{\delta^2\rho}{\delta x^i\delta x^j}+F^2g^{ij}\frac{\partial^2 \rho}{\partial y^i\partial y^j}+\textrm{lower order terms},
\end{equation}
is a quasilinear strictly parabolic equation on $SM$.
\end{lem}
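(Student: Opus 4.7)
The plan is to rewrite the principal part of equation (\ref{lili}) in the form (\ref{po}) using the orthonormal frame $\{\hat{e}_a, \hat{e}_{n+\alpha}\}$ on $SM$, and to verify that the resulting coefficient matrix $G^{AB}$ is positive definite. The underlying idea is that $g^{ij}$ is itself built from the frame via $g^{ij} = \delta^{ab} u^i_a u^j_b$, so the two second-order terms in (\ref{lili}) should essentially be traces of second-order frame derivatives, up to lower-order corrections.

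I would start by recording that since $\rho$ is zero-homogeneous in $y$, it descends to a smooth function on $SM$, and Euler's identity $y^i \partial_{y^i}\rho = 0$ holds. This is the mechanism that reduces the naive $n$-dimensional vertical Hessian $F^2 g^{ij}\partial^2_{y^i y^j}\rho$ to one compatible with the $(n-1)$-dimensional vertical subbundle of $SM$. Concretely, the ``missing'' vertical basis direction is $\hat{e}_{2n} = u^i_n F \partial_{y^i} = y^i \partial_{y^i}$, which annihilates $\rho$; differentiating the Euler identity once more shows that any $y$-contracted second derivative of $\rho$ reduces to a first-order term, which will be absorbed into the lower-order part.

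The next step is to expand $\hat{e}_a\hat{e}_b\rho$ and $\hat{e}_{n+a}\hat{e}_{n+b}\rho$ by the Leibniz rule, retaining only the terms in which both frame vectors differentiate $\rho$ (the rest are by definition lower-order). Contracting with $\delta^{ab}$ and using $g^{ij} = \delta^{ab} u^i_a u^j_b$ turns the horizontal principal part of (\ref{lili}) into $\delta^{ab}\hat{e}_a\hat{e}_b\rho$ plus lower-order terms, giving $P^{ab} = \delta^{ab}$. In the vertical sector, the analogous contraction over $a,b = 1,\dots,n$ yields $F^2 g^{ij}\partial^2_{y^i y^j}\rho$ modulo first-order terms, and the radial component drops out by zero-homogeneity, leaving $Q^{\alpha\beta} = \delta^{\alpha\beta}$ for $\alpha,\beta = 1,\dots,n-1$. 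Because (\ref{lili}) contains no mixed horizontal--vertical second derivatives, one gets $C^{a\alpha} = 0$.

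Assembling the pieces, the principal-symbol matrix becomes
\[
G^{AB} = \begin{pmatrix} \delta^{ab} & 0 \\ 0 & \delta^{\alpha\beta} \end{pmatrix} = I_{2n-1},
\]
which is manifestly positive definite, so (\ref{lili}) matches the template (\ref{po}) with uniform parabolicity constant $\lambda = 1$. The main technical obstacle I anticipate is the careful bookkeeping of lower-order terms generated by the $(x,y)$-dependence of the frame coefficients $u^i_a$, together with the consistent use of zero-homogeneity to eliminate the radial vertical direction; once these two accounting steps are in place, the strict parabolicity on $SM$ is essentially a tautology.
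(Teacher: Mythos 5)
Your proposal is correct and follows essentially the same route the paper takes: the paper states Lemma \ref{mm} without a separate proof, but in the proof of Theorem \ref{main8} it performs exactly this conversion, rewriting the horizontal and vertical Hessians as $\tilde g^{ab}\hat e_a\hat e_b$ and $\tilde g^{\alpha\beta}\hat e_{n+\alpha}\hat e_{n+\beta}$ modulo first-order terms and reading off the positive-definite block-diagonal principal symbol $G^{AB}$. Your explicit use of Euler's identity to eliminate the radial vertical direction, together with the relation $g^{ij}=\delta^{ab}u^i_au^j_b$ coming from orthonormality of $\{e_a\}$, supplies precisely the bookkeeping the paper leaves implicit.
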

%%%%%%%%%%%%%%%%%%%%%%%%%%%%%%%%%%%%%%%%%%%%%%%%%%%%%%%%%%%%%%%%%%%%%%%%%%%%%%%%%%%%%%%%%%%%%%%%%%%%%%%%%%
\section{The harmonic map Laplacian on the Tangent Bundle}
Let $\varphi:(M,g)\longrightarrow (N,h)$ be a diffeomorphism between two n-dimensional Finsler manifolds $(M,F)$ and $(N,\bar{F})$ with the corresponding metric tensors $g$ and $h$ called domain and codomain metric, respectively. Denote image of the linear tangent application $d\varphi(T_{x}M)$ by $T_{\bar{x}}N$, where $\varphi(x)=\bar{x}$. Let $c$ be a geodesic on $(M,F)$ and $\bar{c}(t):=\varphi \circ c(t)$ its image by $\varphi$.
% For any tangent vector $\bar{y}\in T_{\bar{x}}N$, there is a unique geodesic $c$ in $(M,F)$ such that the curve $\bar{c}(t):=\varphi \circ c(t)$ satisfies $\dot{\bar{c}}(0)=\bar{y}$.
Consider a local coordinate system $\{x^{p}\}$ on $M$, a local coordinate system $\{\bar{x}^{i}\}$ on $N$ and $\bar{y}=\dot{\bar{c}}(0)=d\varphi(y)$, where $y=\dot{c}(0)$. Then (\ref{8}) leads to
\begin{eqnarray}\label{12}
\bar{\nabla}_{\dot{\bar{c}}}\dot{\bar{c}}(0)&=&\{\frac{d^{2}\bar{c}^{i}}{dt^{2}}(0)+(\bar{\Gamma}^{i}_{kh}+
\bar{C}^{i}_{ks}\bar{N}^{s}_{h})\frac{d\bar{c}^k}{dt}(0)\frac{d\bar{x}^h}{dt}\}\frac{\partial}{\partial \bar{x}^{i}}\mid_{\bar{x}}\\
&=&\{\frac{d^{2}\bar{c}^{i}}{dt^{2}}(0)+(\bar{\Gamma}^{i}_{kh}+
\bar{C}^{i}_{ks}\bar{N}^{s}_{h})\bar{y}^{k}\bar{y}^{h}\}\frac{\partial}{\partial \bar{x}^{i}}\mid_{\bar{x}}\nonumber\\
&=&\{\frac{\partial^{2}\varphi^{i}}{\partial x^{p} \partial x^{q}}(x)y^{p}y^{q}+\frac{\partial \varphi^{i}}{\partial x^{p}}(x)\frac{d^{2}c^{p}}{dt^{2}}+(\bar{\Gamma}^{i}_{kh}+
\bar{C}^{i}_{ks}\bar{N}^{s}_{h})\bar{y}^{k}\bar{y}^{h}\}\frac{\partial}{\partial \bar{x}^{i}}\mid_{\bar{x}}\nonumber\\
&=&\{\frac{\partial^{2}\varphi^{i}}{\partial x^{p} \partial x^{q}}(x)y^{p}y^{q}
-\frac{\partial \varphi^{i}}{\partial x^{p}}(x)G^{p}(y)+(\bar{\Gamma}^{i}_{kh}+
\bar{C}^{i}_{ks}\bar{N}^{s}_{h})\bar{y}^{k}\bar{y}^{h}\}\frac{\partial}{\partial \bar{x}^{i}}\mid_{\bar{x}}.\nonumber
\end{eqnarray}
On the other hand, $\bar{y}^{i}=\frac{\partial \varphi^{i}}{\partial x^{p}}y^{p}$, thus (\ref{12}) is given by
\begin{eqnarray*}
\bar{\nabla}_{\dot{\bar{c}}}\dot{\bar{c}}(0)
&=&\{\frac{\partial^{2}\varphi^{i}}{\partial x^{p} \partial x^{q}}(x)y^{p}y^{q}
-\frac{\partial \varphi^{i}}{\partial x^{k}}(x)\Gamma^{k}_{pq}y^{p}y^{q}+\bar{\Gamma}^{i}_{kh}\bar{y}^{k}\bar{y}^{h}
\}\frac{\partial}{\partial \bar{x}^{i}}\mid_{\bar{x}}\nonumber \\
&=&y^{p}y^{q}\{\frac{\partial^{2}\varphi^{i}}{\partial x^{p} \partial x^{q}}(x)
-\frac{\partial \varphi^{i}}{\partial x^{k}}(x)\Gamma^{k}_{pq}
+\bar{\Gamma}^{i}_{kh}\frac{\partial \varphi^{k}}{\partial x^{p}}\frac{\partial \varphi^{h}}{\partial x^{q}}\}\frac{\partial}{\partial \bar{x}^{i}}\mid_{\bar{x}}.\nonumber
\end{eqnarray*}
Next, let
\begin{eqnarray*}
A^{i}_{pq}=\{\frac{\partial^{2}\varphi^{i}}{\partial x^{p} \partial x^{q}}(x)
-\frac{\partial \varphi^{i}}{\partial x^{k}}(x)\Gamma^{k}_{pq}
+\bar{\Gamma}^{i}_{kh}\frac{\partial \varphi^{k}}{\partial x^{p}}\frac{\partial \varphi^{h}}{\partial x^{q}}\},\nonumber
\end{eqnarray*}
where $\Gamma^{k}_{pq}$ and $\bar{\Gamma}^{i}_{kh}$ are the coefficients of horizontal covariant derivative of Cartan connection on the coordinate systems $\{x^{i}\}$ and $\{\bar{x}^{i}\}$ on $(M,F)$ and $(N,\bar{F})$, respectively.
Contracting $A^i_{pq}$ with $g^{pq}$ leads to the following definition
\begin{equation} \label{15}
(\Delta_{g,h}\varphi)^{i}:=g^{pq}\{\frac{\partial^{2}\varphi^{i}}{\partial x^{p} \partial x^{q}}(x)
-\frac{\partial \varphi^{i}}{\partial x^{k}}(x)\Gamma^{k}_{pq}
+\bar{\Gamma}^{i}_{kh}\frac{\partial \varphi^{k}}{\partial x^{p}}\frac{\partial \varphi^{h}}{\partial x^{q}}\},
\end{equation}
where, $(\Delta_{g,h}\varphi)^{i}=A^{i}_{pq}g^{pq}$.
The \emph{harmonic} \emph{map} \emph{Laplacian} of $\varphi$ with respect to the domain metric $g$ and the codomain metric $h$ is defined by
$$\Delta_{g,h}\varphi=(\Delta_{g,h}\varphi)^{i}\frac{\partial}{\partial \bar{x}^{i}},$$
where, $(\Delta_{g,h}\varphi)^{i}$ is given by (\ref{15}).
In the following proposition it is shown that the harmonic map Laplacian is invariant under the action of the diffeomorphism group of $M$.
\begin{prop} \label{main2}
Let $\varphi:(M,g)\longrightarrow (N,h)$ be a diffeomorphism between the two Finsler spaces $(M,F)$ and $(N,\bar{F})$ with the corresponding metric tensors $g$ and $h$, respectively. If $\psi$ is a diffeomorphism from $M$ to itself, then
\begin{eqnarray*}
\Delta_{\tilde{\psi}^{*}(g),h}(\varphi \circ\psi)\mid_{x}=(\Delta_{g,h}\varphi)\mid_{\psi(x)}\in T_{\bar{x}}N,\nonumber
\end{eqnarray*}
for all $x\in M$ where $\bar{x}=(\varphi \circ\psi)(x)$ and $\tilde{\psi}$ is the canonical lift of $\psi$ on $TM$.
\end{prop}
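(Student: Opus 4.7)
The proof is a direct coordinate computation starting from the defining formula (\ref{15}). Write $\eta := \varphi\circ\psi$ and use local coordinates $(x^{p})$ on the source copy of $M$, $(z^{a})$ on the target copy (so that $z^{a}=\psi^{a}(x)$), and $(\bar x^{i})$ on $N$. Put $\tilde g := \tilde\psi^{\ast}g$, so that
\[
\tilde g_{pq}(x,y) = g_{ab}\bigl(\psi(x),\tilde\psi_{\ast}y\bigr)\,\frac{\partial \psi^{a}}{\partial x^{p}}\frac{\partial \psi^{b}}{\partial x^{q}},
\qquad
\tilde g^{pq}\,\frac{\partial \psi^{a}}{\partial x^{p}}\frac{\partial \psi^{b}}{\partial x^{q}} = g^{ab}\bigl(\psi(x),\tilde\psi_{\ast}y\bigr).
\]

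The preparatory step is to establish the transformation rule for the Cartan horizontal coefficients under the diffeomorphism $\psi$, namely
\[
\tilde\Gamma^{k}_{pq}(x,y) \;=\; \frac{\partial x^{k}}{\partial z^{c}}\,\Gamma^{c}_{de}\bigl(\psi(x),\tilde\psi_{\ast}y\bigr)\frac{\partial \psi^{d}}{\partial x^{p}}\frac{\partial \psi^{e}}{\partial x^{q}} \;+\; \frac{\partial x^{k}}{\partial z^{c}}\,\frac{\partial^{2}\psi^{c}}{\partial x^{p}\partial x^{q}}.
\]
This is the Finslerian analogue of the classical Riemannian change-of-coordinates formula. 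It follows from $\Gamma^{i}_{jk}=\tfrac12 g^{ih}(\delta_{j}g_{hk}+\delta_{k}g_{jh}-\delta_{h}g_{jk})$ once one verifies that the modified horizontal vector field $\delta/\delta x^{p}$ associated with $\tilde g$ corresponds under $\tilde\psi$ to $(\partial \psi^{a}/\partial x^{p})\,\delta/\delta z^{a}$ on the target copy; the Cartan tensor $C^{i}_{jk}$ that sits inside $N^{i}_{j}$ transforms as a $(1,2)$-tensor under $\tilde\psi$ and so contributes no inhomogeneous piece. I expect this preparatory lemma to be the main technical obstacle, because in contrast with the Riemannian case one must track the $y$-dependence of $\Gamma^{i}_{jk}$ and verify that $\tilde\psi_{\ast}$ intertwines horizontal derivatives on $(M,\tilde g)$ with horizontal derivatives on $(M,g)$.

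With this rule in hand, the chain rule gives
\[
\frac{\partial \eta^{i}}{\partial x^{p}} = \frac{\partial \varphi^{i}}{\partial z^{a}}\frac{\partial \psi^{a}}{\partial x^{p}},
\qquad
\frac{\partial^{2}\eta^{i}}{\partial x^{p}\partial x^{q}} = \frac{\partial^{2}\varphi^{i}}{\partial z^{a}\partial z^{b}}\frac{\partial \psi^{a}}{\partial x^{p}}\frac{\partial \psi^{b}}{\partial x^{q}} + \frac{\partial \varphi^{i}}{\partial z^{a}}\frac{\partial^{2}\psi^{a}}{\partial x^{p}\partial x^{q}}.
\]
Plugging these expressions together with the transformation rule for $\tilde\Gamma^{k}_{pq}$ into formula (\ref{15}) written for $\Delta_{\tilde g,h}\eta$, the two $\partial^{2}\psi$-contributions coming respectively from $\partial^{2}\eta^{i}/\partial x^{p}\partial x^{q}$ and from $(\partial \eta^{i}/\partial x^{k})\,\tilde\Gamma^{k}_{pq}$ cancel exactly. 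The remaining expression inside the bracket factorises as
\[
\frac{\partial \psi^{a}}{\partial x^{p}}\frac{\partial \psi^{b}}{\partial x^{q}}\left\{\frac{\partial^{2}\varphi^{i}}{\partial z^{a}\partial z^{b}} - \frac{\partial \varphi^{i}}{\partial z^{c}}\,\Gamma^{c}_{ab} + \bar\Gamma^{i}_{cd}\,\frac{\partial \varphi^{c}}{\partial z^{a}}\frac{\partial \varphi^{d}}{\partial z^{b}}\right\}.
\]

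The final step is to contract with $\tilde g^{pq}$ and invoke the identity $\tilde g^{pq}\,(\partial \psi^{a}/\partial x^{p})(\partial \psi^{b}/\partial x^{q}) = g^{ab}$ recorded above; this collapses the preceding display to $g^{ab}A^{i}_{ab}\,\partial/\partial \bar x^{i}$ evaluated at the point $\psi(x)$, which is precisely $(\Delta_{g,h}\varphi)|_{\psi(x)}\in T_{\bar x}N$. This completes the proof. No compactness, no parabolicity, and no Finsler-specific positivity are used; the statement is purely a naturality property of the harmonic map Laplacian, and the only genuinely Finslerian input is the transformation law for the Cartan $\Gamma$'s isolated in the first paragraph.
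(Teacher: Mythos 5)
Your proof is correct. Note that the paper states Proposition \ref{main2} without giving any proof at all, so there is no argument of the authors' to compare yours against; what you have written fills that gap, and it does so by the route one would expect: the transformation law $\tilde\Gamma^{k}_{pq}=\frac{\partial x^{k}}{\partial z^{c}}\Gamma^{c}_{de}\frac{\partial\psi^{d}}{\partial x^{p}}\frac{\partial\psi^{e}}{\partial x^{q}}+\frac{\partial x^{k}}{\partial z^{c}}\frac{\partial^{2}\psi^{c}}{\partial x^{p}\partial x^{q}}$ for the Cartan coefficients of $\tilde\psi^{*}g$, the chain rule for $\eta=\varphi\circ\psi$, cancellation of the two inhomogeneous $\partial^{2}\psi$ terms, and the contraction identity $\tilde g^{pq}\,\partial_{p}\psi^{a}\partial_{q}\psi^{b}=g^{ab}$. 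You correctly isolate the only genuinely Finslerian issue, namely that $\tilde\psi_{*}$ carries $\delta/\delta x^{p}$ for $\tilde g$ to $\frac{\partial\psi^{a}}{\partial x^{p}}\,\delta/\delta z^{a}$ for $g$ (equivalently, that the nonlinear connection $N^{i}_{j}$ is natural under the lifted diffeomorphism), which is what makes $\Gamma^{i}_{jk}=\tfrac12 g^{ih}(\delta_{j}g_{hk}+\delta_{k}g_{jh}-\delta_{h}g_{jk})$ obey the classical rule; this holds because the Cartan connection is canonically determined by $F$ and $\tilde\psi$ is an isometry from $(M,\tilde\psi^{*}F)$ onto $(M,F)$. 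One further small point worth recording in a write-up: the identity is an equality of sections over $TM_{0}$ (or $SM$), the left side at $(x,y)$ matching the right side at $(\psi(x),d\psi(y))$, which your bookkeeping of the arguments $\bigl(\psi(x),\tilde\psi_{*}y\bigr)$ already handles.
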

\begin{cor} \label{main3}
Let $(M,\tilde{g})$ and $(N,h)$ be two n-dimensional Finsler spaces with corresponding Finsler structures $\tilde{F}$ and $\bar{F}$ respectively. Let $N=M$ and $\varphi$ be the identity map $\varphi=Id:(M,\tilde{g})\longrightarrow (M,h)$, then we have
\begin{equation} \label{17}
(\Delta_{\tilde{g},h}Id)^{k}=\tilde{g}^{ij}(-\tilde{\Gamma}^{k}_{ij}+\bar{\Gamma}^{k}_{ij}),
\end{equation}
where, $\tilde{\Gamma}^{k}_{ij}$ and $\bar{\Gamma}^{k}_{ij}$ are the coefficients of horizontal covariant derivative of Cartan connection with respect to the $\tilde{g}$ and $h$, respectively.
\end{cor}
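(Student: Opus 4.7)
The plan is to obtain the corollary as a direct specialization of the definition of the harmonic map Laplacian given in equation~(\ref{15}). Since $\varphi=Id:(M,\tilde g)\longrightarrow(M,h)$ is the identity map, using the same coordinate system $\{x^i\}$ on both copies of $M$, the components satisfy $\varphi^i(x)=x^i$. Therefore
\begin{eqnarray*}
\frac{\partial \varphi^{i}}{\partial x^{k}}(x)=\delta^{i}_{k},\qquad
\frac{\partial^{2}\varphi^{i}}{\partial x^{p}\partial x^{q}}(x)=0.
\end{eqnarray*}

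Substituting these expressions into~(\ref{15}), with the domain metric $\tilde g$ (whose Cartan horizontal coefficients are $\tilde\Gamma^{k}_{ij}$) and the codomain metric $h$ (whose Cartan horizontal coefficients are $\bar\Gamma^{k}_{ij}$), the first term vanishes, the second contributes $-\tilde g^{pq}\,\delta^{i}_{k}\,\tilde\Gamma^{k}_{pq}=-\tilde g^{pq}\tilde\Gamma^{i}_{pq}$, and the third contributes $\tilde g^{pq}\,\bar\Gamma^{i}_{kh}\,\delta^{k}_{p}\delta^{h}_{q}=\tilde g^{pq}\bar\Gamma^{i}_{pq}$. Relabeling $(p,q)\leftrightarrow(i,j)$ and $i\to k$ yields exactly
\begin{eqnarray*}
(\Delta_{\tilde g,h}Id)^{k}=\tilde g^{ij}\bigl(-\tilde\Gamma^{k}_{ij}+\bar\Gamma^{k}_{ij}\bigr),
\end{eqnarray*}
as claimed.

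The only conceptual point to address, rather than a genuine obstacle, is that in Finsler geometry the Cartan coefficients $\tilde\Gamma^{k}_{ij}$ and $\bar\Gamma^{k}_{ij}$ depend on the line element $(x,y)$, not only on $x$. Since $\varphi=Id$ lifts canonically to the identity on $TM$ (so that $\bar y=y$), both sets of coefficients are evaluated at the same $(x,y)$, and the substitution above is unambiguous. No further computation is required; the corollary is just the evaluation of~(\ref{15}) at the identity map.
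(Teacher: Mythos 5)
Your proof is correct and matches the paper's intent exactly: the paper states Corollary~\ref{main3} without an explicit proof, treating it as the immediate specialization of the definition~(\ref{15}) to $\varphi=Id$, which is precisely the substitution you carry out. Your added remark that the identity lifts canonically to $TM$ so that both sets of Cartan coefficients are evaluated at the same line element $(x,y)$ is a sensible clarification, not a deviation.
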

Next, consider the vector field $\xi(x,y)$ as a section of $\pi^{*}TM$ with the components $\xi^{k}:=(\Delta_{\tilde{g},h}Id)^{k}$.
It can be easily verified that the components of $\xi=\Delta_{\tilde{g}(t),h}Id=\tilde{g}^{mn}(x,y)(-\tilde{\Gamma}^{k}_{mn}+\bar{\Gamma}^{k}_{mn})\frac{\partial}{\partial x^k}\in \Gamma(\pi^{*}TM)$ are homogeneous of degree zero on $y$, thus $\xi$ can be considered as a vector field on $SM$.
 Using the fact that the difference of two connections is a tensor, $\xi$ is a globally well-defined vector field.
%%%%%%%%%%%%%%%%%%%%%%%%%%%%%%%%%%%%%%%%%%%%%%%%%%%%%%%%%%%%%%%%%%%%%%%%%%%%%%%%%%%%%%%%%%%%%%%%%%%%%%%%%%%%%%%%%%%%%%%
\section{ Finslerian Ricci-DeTurck flow and existence of solution}
In general Finslerian setting, a notable definition of Ricci tensor is introduced by H. Akbar-Zadeh as follows:
\begin{equation} \label{19}
Ric_{jk}:=[\frac{1}{2}F^{2}\mathcal{R}ic]_{y^{j}y^{k}},
\end{equation}
where, $\mathcal{R}ic=R^{i}_{i}$ and $R^{i}_{k}$ are defined by (\ref{18}). One of the advantages of Ricci quantity defined here is its independence to the choice of Cartan, Berwald or Chern(Rund) connections. Based on the Ricci tensor, in analogy with the Ricci-flow in Riemannian case, D. Bao has considered, the following natural extension of Ricci flow in Finsler geometry, see \cite{Bao},
\begin{equation}\label{20002}
\frac{\partial}{\partial t}g_{jk}=-2Ric_{jk},\hspace{0.6cm}g_{(t=0)}=g_{0}.
\end{equation}
Contracting with $y^{j}y^{k}$, via Euler's theorem, leads to $\frac{\partial}{\partial t}F^{2}=-2F^{2}\mathcal{R}ic$. That is,
\begin{equation} \label{20}
\frac{\partial}{\partial t}\log F=-\mathcal{R}ic,\hspace{0.6cm}F_{(t=0)}=F_{0},
\end{equation}
where $F_{0}$ is the initial Finsler structure. It can be easily verified that (\ref{20002}) and (\ref{20}) are equivalent. In order to show that the above Ricci flow in Finsler geometry has a solution on a short time interval, we replace the Finslerian Ricci flow by an equivalent evolution equation which has a solution in short time. In analogy with the Ricci-DeTurck flow in Riemannian case, we consider the following natural extension of Ricci-DeTurck flow in Finsler geometry.
\begin{defn} \label{Main4}
Let $(M,F)$ be a compact Finsler manifold with the fixed back-ground metric tensor $h$. Assume that for all $t\in [0,T)$, $\tilde{F}(t)$ is a one-parameter family of Finsler structures on $TM$ and $\tilde{g}(t)$ is the tensor metric related to $\tilde{F}(t)$. We say that $\tilde{g}(t)$ is a solution to the Finslerian Ricci-DeTurck flow if
\begin{equation} \label{21}
\frac{\partial}{\partial t}\tilde{g}_{jk}(t)=-2Ric_{jk}(\tilde{g}(t))-\mathcal{L}_{\xi}\tilde{g}_{jk}(t),
\end{equation}
where, $\mathcal{L}_{\xi}$ is the Lie derivative with respect to $\xi=\tilde{g}^{mn}(-\tilde{\Gamma}^{k}_{mn}+\bar{\Gamma}^{k}_{mn})\frac{\partial}{\partial x^k}\in \Gamma(\pi^{*}TM)$ which is a vector field on $SM$ as mentioned earlier.
\end{defn}
Contracting (\ref{21}) by $y^{j}y^{k}$, via Euler's theorem, we have the \emph{ scalar Ricci-DeTurck flow} in Finsler spaces.
\begin{equation} \label{22}
\frac{\partial}{\partial t}\tilde{F}^{2}(t)=-2\tilde{F}^{2}(t)\mathcal{R}ic(\tilde{g}(t))-\mathcal{L}_{\xi}\tilde{F}^{2}(t).
\end{equation}
By means of commutativity of $y$ and $\mathcal{L}_{\xi}$ one can easily verify that (\ref{22}) is equivalent to (\ref{21}). Moreover, this definition of Ricci-DeTutck flow reduces to the Ricci-DeTutck flow in Riemannian geometry if the Finsler structure $\tilde{F}$ arises from a Riemannian metric $\tilde{g}$ or equivalently $\tilde{g}_{jk}$ is independent of $y$.
The following theorem shows that the above Ricci-DeTurck flow is well defined and have a solution on a short time interval.
\begin{them}\label{main8}
Let $M$ be a compact differentiable manifold with a fixed background Finsler structure $\bar{F}$ and related Finsler metric $h$. Given any initial Finsler structure $F_{0}$ with metric tensor $g_{0}$, there exists a real number $T>0$ and a smooth one-parameter family of Finsler structures $\tilde{F}(t)$, $t\in[0,T)$, with metric tensor $\tilde{g}(t)$, such that $\tilde{F}(t)$ is a solution to the Finslerian Ricci-DeTurck flow and $\tilde{F}(0)=F_{0}$.
\end{them}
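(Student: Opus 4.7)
The plan is to reduce the system \eqref{21} to a scalar quasilinear PDE on the sphere bundle, show that adding the DeTurck vector field $\xi$ produces precisely the missing terms needed to make the principal symbol positive definite, and then invoke the short-time existence theory for quasilinear strictly parabolic equations on the compact manifold $SM$. Since \eqref{21} and its contracted scalar form \eqref{22} are equivalent by the commutativity argument noted after \eqref{22}, it suffices to prove short-time existence for \eqref{22} with initial datum $\tilde{F}^{2}(0)=F_{0}^{2}$.

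First I would expand the principal part of the right-hand side of \eqref{22}. Using \eqref{18} for $R^{i}_{k}$, the term $-2\tilde{F}^{2}\mathcal{R}ic(\tilde{g})$ is built from $\partial_x\partial_x G^{i}$, $\partial_x\partial_y G^{i}$ and $\partial_y\partial_y G^{i}$, where $G^{i}=\frac{1}{4}\tilde{g}^{ih}(\partial_{h}\partial_{x^{j}}\tilde{F}^{2}\,y^{j}-\partial_{h}\tilde{F}^{2})$. In local coordinates this produces terms of the schematic form
\begin{equation*}
-\tilde{g}^{ij}\tfrac{\delta^{2}\tilde{F}^{2}}{\delta x^{i}\delta x^{j}}-\tilde{F}^{2}\tilde{g}^{ij}\tfrac{\partial^{2}\tilde{F}^{2}}{\partial y^{i}\partial y^{j}}+(\text{non-parabolic second-order terms})+(\text{lower order}),
\end{equation*}
where the non-parabolic contribution mirrors the obstruction $\nabla_j\Gamma^{j}_{\cdot\cdot}$ that appears in the Riemannian case due to the diffeomorphism invariance of $\mathcal{R}ic$. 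I would then compute $\mathcal{L}_{\xi}\tilde{F}^{2}$ where $\xi^{k}=\tilde{g}^{mn}(-\tilde{\Gamma}^{k}_{mn}+\bar{\Gamma}^{k}_{mn})$, observing that the highest-order contribution of $\mathcal{L}_\xi\tilde F^2$ comes from differentiating the $\tilde{\Gamma}^{k}_{mn}$ inside $\xi$; by Corollary \ref{main3} the Cartan-connection difference has the same highest-derivative structure as in the Riemannian DeTurck trick. The direct calculation (analogous to DeTurck's identity in \cite{DeT}, but carried out with the horizontal-vertical decomposition of Section 2) shows that these bad terms cancel precisely with the non-parabolic piece of $-2\tilde{F}^{2}\mathcal{R}ic$, leaving
\begin{equation*}
\frac{\partial\tilde{F}^{2}}{\partial t}=\tilde{g}^{ij}\frac{\delta^{2}\tilde{F}^{2}}{\delta x^{i}\delta x^{j}}+\tilde{F}^{2}\tilde{g}^{ij}\frac{\partial^{2}\tilde{F}^{2}}{\partial y^{i}\partial y^{j}}+\text{lower order terms}.
\end{equation*}

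Once the equation is in this form, since $\tilde{F}^{2}$ is zero-homogeneous after dividing by $F^2$-type normalization and the whole equation descends to $SM$, Lemma \ref{mm} applies directly: the system is a quasilinear strictly parabolic PDE on the compact $(2n-1)$-dimensional manifold $SM$. The matrix $G^{AB}$ in the notation of \eqref{po} is built from $\tilde{g}^{ij}$ and $\tilde{F}^{2}\tilde{g}^{ij}$ in the horizontal and vertical blocks respectively, and is positive definite since $\tilde{g}$ is. I would then invoke the standard short-time existence theorem for quasilinear strictly parabolic systems on compact manifolds (in the form used by Hamilton \cite{Ham1} or from the Ladyzhenskaya–Solonnikov–Ural'tseva theory) to obtain a smooth one-parameter family $\tilde{F}(t)$ on $[0,T)$ with $\tilde{F}(0)=F_{0}$, defined on some maximal interval $T>0$. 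Finally I would check that the solution $\tilde{F}(t)$ remains a genuine Finsler structure for small $t$, which follows from openness of the positive-definiteness condition on the Hessian $\frac{1}{2}\partial_{y^i}\partial_{y^j}\tilde{F}^{2}$ together with the smoothness and compactness of $SM$.

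The main obstacle is the second step: verifying the exact cancellation of the non-parabolic second-order terms between $-2\tilde F^{2}\mathcal R ic(\tilde g)$ and $-\mathcal L_\xi\tilde F^{2}$. In Riemannian geometry this is the clean identity $-2R_{ij}+\nabla_i V_j+\nabla_j V_i=\Delta g_{ij}+(\text{lower})$; in the Finsler setting one must keep track of both horizontal and vertical differentiations, of the Chern/Cartan connection difference terms, and of the $y$-dependence of $\tilde g$ introduced through $G^i$. Handling these extra terms carefully, and ensuring that $\xi$ in Definition \ref{Main4} is exactly the vector field making the cancellation work on $SM$, is the computational heart of the argument; all the other steps reduce to invoking Lemma \ref{mm} and classical parabolic theory.
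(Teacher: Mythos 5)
Your proposal is correct and follows essentially the same route as the paper: contract to the scalar flow, expand $-2\tilde F^{2}\mathcal{R}ic$ and $\mathcal{L}_{\xi}\tilde F^{2}$, observe that the DeTurck term cancels the non-parabolic second-order pieces leaving principal part $\tilde g^{sh}\frac{\delta^{2}}{\delta x^{s}\delta x^{h}}+\tilde F^{2}\tilde g^{sh}\frac{\partial^{2}}{\partial y^{s}\partial y^{h}}$, then descend to the compact $SM$ via Lemma \ref{mm} and apply standard parabolic existence theory. The only real divergence is that the paper strips off the $y^{p}y^{q}$ contraction and runs the parabolic argument on the component system for $\tilde g_{pq}$, which is genuinely zero-homogeneous so Lemma \ref{mm} applies verbatim, whereas your plan keeps $\tilde F^{2}$ (which is $2$-homogeneous) as the unknown and only gestures at the needed normalization; otherwise the cancellation you flag as the computational heart is exactly what the paper verifies in its displayed identities.
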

\begin{proof}
The Finslerian  Ricci-DeTurck flow (\ref{22}) can be written in the following form
\begin{equation} \label{28}
y^{p}y^{q}\frac{\partial}{\partial t}\tilde{g}_{pq}(t)=-2\tilde{F}^{2}(t)\mathcal{R}ic(\tilde{g}(t))-y^{p}y^{q}\mathcal{L}_{\xi}\tilde{g}_{pq}(t).
\end{equation}
By means of Lie derivative formula %(\ref{LIE3})
along the vector field $\xi$, contracting with $y^{p}y^{q}$ and using Euler's theorem we have
\begin{equation} \label{30}
y^{p}y^{q}\mathcal{L}_{\xi}\tilde{g}_{pq}=y^{p}y^{q}(\nabla_{p}\xi_{q}+\nabla_{q}\xi_{p}).
\end{equation}
We rewrite the term $\nabla_{p}\xi_{q}+\nabla_{q}\xi_{p}$ as follows
\begin{eqnarray*}
\nabla_{p}\xi_{q}+\nabla_{q}\xi_{p}=(\nabla_{p}\tilde{g}_{ql}\xi^{l})+(\nabla_{q}\tilde{g}_{pl}\xi^{l})=
\tilde{g}_{ql}(\nabla_{p}\xi^{l})+\tilde{g}_{pl}(\nabla_{q}\xi^{l}).
\end{eqnarray*}
Thus (\ref{30}) is written
\begin{eqnarray} \label{31}
&&y^{p}y^{q}\mathcal{L}_{\xi}\tilde{g}_{pq}=y^{p}y^{q}\tilde{g}^{mn}(\delta_{p}\delta_{q}\tilde{g}_{mn}-
\delta_{q}\delta_{n}\tilde{g}_{pm}-\delta_{p}\delta_{m}\tilde{g}_{qn}) \\
&&+y^{p}y^{q}[\tilde{g}_{ql}\delta_{p}(\tilde{g}^{mn})(\Gamma(h)^{l}_{mn}-\Gamma(\tilde{g})^{l}_{mn})+\tilde{g}_{pl}\delta_{q}(\tilde{g}^{mn})(\Gamma(h)^{l}_{mn}-\Gamma(\tilde{g})^{l}_{mn})\nonumber\\
&&+\tilde{g}_{ql}\tilde{g}^{mn}\delta_{p}
(\Gamma(h)^{l}_{mn})+\tilde{g}_{pl}\tilde{g}^{mn}\delta_{q}(\Gamma(h)^{l}_{mn})+\tilde{g}_{ql}\Gamma(\tilde{g})^{l}_{pw}\xi^{w}+\tilde{g}_{pl}\Gamma(\tilde{g})^{l}_{qw}\xi^{w}].\nonumber
\end{eqnarray}
Also we have
\begin{equation} \label{32}
-2\tilde{F}^{2}\mathcal{R}ic(\tilde{g})=-2\tilde{F}^{2}R^{s}_{s}=-2\tilde{F}^{2}l^{p}R^{s}_{psq}l^{q},
\end{equation}
where $R^{s}_{psq}$ are the components of hh-curvature tensor of Cartan connection and $l^{p}=\frac{y^{p}}{\tilde{F}}$ are the components of Liouville vector field. 
Hence,
\begin{eqnarray} \label{34}
&-2\tilde{F}^{2}\mathcal{R}ic(\tilde{g})&=-y^{p}y^{q}\tilde{g}^{sh}(\delta_{s}\delta_{p}\tilde{g}_{hq}-\delta_{s}\delta_{h}\tilde{g}_{pq}-\delta_{q}\delta_{p}\tilde{g}_{hs}+\delta_{q}\delta_{h}\tilde{g}_{ps})\nonumber\\
&&-2y^{p}y^{q}(\Gamma^{s}_{hs}\Gamma^{h}_{pq}-\Gamma^{s}_{hq}\Gamma^{h}_{ps})-y^{p}y^{q}(\delta_{s}\tilde{g}^{sh})(\delta_{p}\tilde{g}_{hq}+\delta_{q}\tilde{g}_{ph}-\delta_{h}\tilde{g}_{pq})\nonumber\\
&&+y^{p}y^{q}(\delta_{q}\tilde{g}^{sh})(\delta_{p}\tilde{g}_{hs}+\delta_{s}\tilde{g}_{ph}-\delta_{h}\tilde{g}_{ps}).
\end{eqnarray}
Substituting (\ref{31}) and (\ref{34}) in (\ref{28}), we obtain an equation which will be referred in the sequel as differential equation of scalar Finslerian Ricci-DeTurck flow
\begin{align} \label{36}
y^{p}y^{q}\frac{\partial}{\partial t}\tilde{g}_{pq}(t)&=y^{p}y^{q}\tilde{g}^{sh}(\delta_{s}\delta_{h}\tilde{g}_{pq})\\
&-2y^{p}y^{q}(\Gamma^{s}_{hs}\Gamma^{h}_{pq}-\Gamma^{s}_{hq}
\Gamma^{h}_{ps})-y^{p}y^{q}(\delta_{s}\tilde{g}^{sh})(\delta_{p}\tilde{g}_{hq}+\delta_{q}\tilde{g}_{ph}-\delta_{h}\tilde{g}_{pq})\nonumber\\
&+y^{p}y^{q}(\delta_{q}\tilde{g}^{sh})(\delta_{p}\tilde{g}_{hs}+\delta_{s}\tilde{g}_{ph}-\delta_{h}\tilde{g}_{ps})\nonumber\\
&-y^{p}y^{q}[\tilde{g}_{ql}(\delta_{p}\tilde{g}^{sh})(\Gamma(h)^{l}_{sh}-\Gamma(\tilde{g})^{l}_{sh})-\tilde{g}_{pl}(\delta_{q}\tilde{g}^{sh})(\Gamma(h)^{l}_{sh}-\Gamma(\tilde{g})^{l}_{sh})\nonumber\\
&+\tilde{g}_{ql}\tilde{g}^{sh}\delta_{p}(\Gamma(h)^{l}_{sh})+\tilde{g}_{pl}\tilde{g}^{sh}\delta_{q}(\Gamma(h)^{l}_{sh})+\tilde{g}_{ql}
\Gamma(\tilde{g})^{l}_{pw}\xi^{w}
+\tilde{g}_{pl}\Gamma(\tilde{g})^{l}_{qw}\xi^{w}].\nonumber
\end{align}
Applying  twice the vector field  $\frac{\delta}{\delta x^{s}}$ on the components of metric tensor  $g_{pq}$ yields
\begin{eqnarray*} \label{RE1}
\delta_{s}\delta_{h}g_{pq}&=&\frac{\partial^{2}g_{pq}}{\partial x^{s}\partial x^{h}}-\frac{\partial N^{k}_{h}}{\partial x^{s}}\frac{\partial g_{pq}}{\partial y^{k}}-N^{k}_{h}\frac{\partial^{2}g_{pq}}{\partial x^{s}\partial y^{k}}-N^{l}_{s}\frac{\partial^{2}g_{pq}}{\partial y^{l}\partial x^{h}}\nonumber\\
&+&N^{l}_{s}\frac{\partial N_{h}^{k}}{\partial y^{l}}\frac{g_{pq}}{\partial y^{k}}+
N^{l}_{s}N_{h}^k\frac{\partial^2g_{pq}}{\partial y^k\partial y^l}.\nonumber
\end{eqnarray*}
Finally we get
\begin{eqnarray} \label{1000}
&y^{p}y^{q}&[\frac{\partial}{\partial t}\tilde{g}_{pq}(t)-\tilde{g}^{sh}\frac{\delta^{2}\tilde{g}_{pq}}{\delta x^{s}\delta x^{h}}-\tilde{F}^2\tilde{g}^{sh}\frac{\partial^{2}\tilde{g}_{pq}}{\partial y^{s}\partial y^{h}}\\
&&+2(\Gamma^{s}_{hs}\Gamma^{h}_{pq}-\Gamma^{s}_{hq}\Gamma^{h}_{ps})+(\delta_{s}\tilde{g}^{sh})(\delta_{p}\tilde{g}_{hq}+\delta_{q}\tilde{g}_{ph}-\delta_{h}\tilde{g}_{pq})\nonumber\\
&&-(\delta_{q}\tilde{g}^{sh})(\delta_{p}\tilde{g}_{hs}+\delta_{s}\tilde{g}_{ph}-\delta_{h}\tilde{g}_{ps})\nonumber\\
&&+\tilde{g}_{ql}(\delta_{p}\tilde{g}^{sh})(\Gamma(h)^{l}_{sh}-\Gamma(\tilde{g})^{l}_{sh})+\tilde{g}_{pl}(\delta_{q}\tilde{g}^{sh})(\Gamma(h)^{l}_{sh}-\Gamma(\tilde{g})^{l}_{sh})\nonumber\\
&&-\tilde{g}_{ql}\tilde{g}^{sh}\delta_{p}(\Gamma(h)^{l}_{sh})-\tilde{g}_{pl}\tilde{g}^{sh}\delta_{q}(\Gamma(h)^{l}_{sh})-\tilde{g}_{ql}\Gamma
(\tilde{g})^{l}_{pw}\xi^{w}-\tilde{g}_{pl}
\Gamma(\tilde{g})^{l}_{qw}\xi^{w}]=0.\nonumber
\end{eqnarray}
The expression in the brackets in (\ref{1000}) is written in the following form
\begin{equation}\label{1001}
\frac{\partial}{\partial t}\tilde{g}_{pq}(t)-\tilde{g}^{sh}\frac{\delta^{2}\tilde{g}_{pq}}{\delta x^{s}\delta x^{h}}-\tilde{F}^2\tilde{g}^{sh}\frac{\partial^{2}\tilde{g}_{pq}}{\partial y^{s}\partial y^{h}}+\textrm{lower order terms}=0.
\end{equation}
This is a quasilinear system of strictly parabolic differential equations on $TM$ which is homogeneous of degree zero. %In order to apply the existence theory of parabolic systems, we have to make sure that the system is defined on a compact domain and the solutions are metric tensors on $p^{*}TM$.
By assumption $M$ is compact and the sphere bundle $SM$ as well. Therefore we rewrite (\ref{1001}) on $SM$. By restricting the metric tensor $\tilde{g}$ on $p^{*}TM$ and using Lemma \ref{mm} we can rewrite (\ref{1001}) in terms of the basis $\{\hat{e}_a,\hat{e}_{n+\alpha}\}$ on $SM$ as follows
\begin{eqnarray}\label{asli}
\frac{\partial}{\partial t}\tilde{g}_{pq}(t)-\tilde{g}^{ab}\hat{e}_{b}\hat{e}_{a}\tilde{g}_{pq}-
\tilde{g}^{\alpha\beta}\hat{e}_{n+\beta}\hat{e}_{n+\alpha}\tilde{g}_{pq}
+B^c\hat{e}_c\tilde{g}_{pq}
+D^\gamma\hat{e}_{n+\gamma}\tilde{g}_{pq}\nn\\
+\textrm{lower order terms}=0,
\end{eqnarray}
Since the %$(2n-1)\times(2n-1)$
coefficients
\begin{displaymath}
G^{_{AB}}=\left(\begin{array}{c|c}
\tilde{g}^{ab} & 0 \\
\hline
0 & \tilde{g}^{\alpha\beta}
\end{array}\right)_{(2n-1)\times(2n-1)},
\end{displaymath}
of principal (second) order terms of (\ref{asli}) are  positive definite, by definition, it is a quasilinear strictly parabolic system on $SM$. By means of the standard existence and uniqueness theorem for parabolic systems, (\ref{asli}) has a unique solution, namely $\tilde{g}_{pq}$ on $SM$. Recall that this solution is by definition a member of the family of Finsler metrics defined on $p^{*}TM$ and determines a Finsler structure $\tilde{F}$ on $TM$. Therefore the Finslerian Ricci-DeTurck flow (\ref{1000}) has a solution on $TM$.
This completes the existence statement.
\end{proof}
%%%%%%%%%%%%%%%%%%%%%%%%%%%%%%%%%%%%%%%%%%%%%%%%%%%%%%%%%%%%%%%%%%%%%%%%%%%%%%%%%%%%%%%%%%%%%%%%%%%%%%%%%%%%
\section{Short time solution to the Finslerian Ricci flow}
In this section it is shown that any solution to the Finslerian Ricci-DeTurck flow gives rise to a solution to the Finslerian Ricci flow. Here, we recall some well known results which will be used in the sequel.
\begin{Lemma} \label{main9}
\cite{Chow1} If $\{X_{t}:0\leq t<T\leq \infty\}$ is a continuous time-dependent family of vector fields on a compact manifold $M$, then there exists a one-parameter family of diffeomorphisms $\{\varphi_{t}:M \longrightarrow M;\quad 0 \leq t<T \leq \infty\}$ defined on the same time interval such that
\begin{equation} \label{lem}
\left\{
\begin{array}{l}
\frac{\partial \varphi_{t}}{\partial t}(x)=X_{t}[\varphi_{t}(x)],\cr
\varphi_{0}(x)=x,
\end{array}
\right.
\end{equation}
for all $x\in M$ and $t\in[0,T)$.
\end{Lemma}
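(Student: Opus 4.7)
The plan is to reduce the statement to the classical theory of ordinary differential equations and exploit compactness of $M$ to upgrade local solutions into a global flow. First, given a point $x \in M$ and a coordinate chart around it, the defining equation (\ref{lem}) becomes, in coordinates, a system of first-order ODEs of the form $\dot{\varphi}^i = X_t^i(\varphi)$ with continuous time-dependence and $C^\infty$ dependence on the spatial variables. By the classical Picard--Lindelöf / Cauchy--Lipschitz theorem, for each initial point there is a positive time interval on which a unique $C^\infty$-in-space solution exists.

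Next, I would upgrade this pointwise local existence to a uniform local existence. Cover $M$ by finitely many coordinate charts (possible since $M$ is compact) and, for each fixed $T' < T$, bound the vector field $X_t$ and its spatial derivatives uniformly on $[0,T'] \times M$ (again using compactness and continuity). These uniform bounds give a common $\varepsilon > 0$ such that, for every starting point $x \in M$, the ODE has a solution on $[0,\varepsilon]$. Because $M$ is compact, an integral curve cannot escape to infinity, so the standard continuation principle allows us to extend this solution to the whole interval $[0,T)$. One then checks that the different local solutions agree on overlaps by uniqueness, producing a well-defined map $\varphi_t:M\to M$ for every $t\in[0,T)$.

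Smoothness of $(t,x)\mapsto \varphi_t(x)$ follows from the standard theorem on smooth dependence of ODE solutions on initial data, applied chart-by-chart. To confirm that each $\varphi_t$ is a \emph{diffeomorphism}, I would construct a candidate inverse by solving the time-reversed system: fix $t_0 \in [0,T)$ and consider the vector field $Y_s := -X_{t_0 - s}$ for $s \in [0,t_0]$; the resulting flow $\psi_s$ satisfies $\psi_{t_0} \circ \varphi_{t_0} = \mathrm{id}_M = \varphi_{t_0} \circ \psi_{t_0}$ by the uniqueness of ODE solutions, giving a smooth two-sided inverse.

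The main technical obstacle is making the uniform-time-of-existence step clean: one must be careful to choose the coordinate cover and Lipschitz bounds so that the estimate is independent of the starting point and uniform on each compact subinterval $[0,T']\subset [0,T)$. Once that is in place, the rest is bookkeeping, as compactness prevents blow-up and uniqueness takes care of all gluing.
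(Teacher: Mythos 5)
The paper does not prove this lemma; it is quoted from Chow--Knopf \cite{Chow1} as a known result, so there is no in-paper argument to compare against. Your proof is the standard one for that result --- local existence and uniqueness via Picard--Lindel\"of in charts, a uniform existence time from compactness, continuation to all of $[0,T)$ because integral curves cannot escape a compact manifold, and invertibility of each $\varphi_{t_0}$ via the time-reversed flow $Y_s=-X_{t_0-s}$ --- and it is correct. The only caveat is the regularity claim: with $X_t$ merely continuous in $t$, the flow is smooth in $x$ but only $C^1$ in $t$, so ``smooth dependence'' should be asserted in the spatial variable only; the lemma as stated requires nothing more.
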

\begin{propriete} \label{main10}
\cite{Jost} Let $M$ be a smooth manifold and $V:J\times M\longrightarrow TM$ a smooth time-dependent vector field on $M$. Then there exists an open set $U\subset J\times I\times M$ and a smooth map $\phi:U\longrightarrow M$ such that for each $s\in J$ and $p\in M$, the set $U^{(s,p)}=\{t\in J:(t,s,p)\in U\}$ is an open interval containing $s$ and the smooth curve
\begin{equation} \label{43}
\left\{
\begin{array}{l}
\gamma :U^{(s,p)}\longrightarrow M,\cr
\gamma(t)=\phi(t,s,p),
\end{array}
\right.
\end{equation}
is the unique maximal solution to the initial value problem
\begin{equation}\label{44}
\left\{
\begin{array}{l}
\dot{\gamma}(t)=V(t,\gamma(t)),\cr
\gamma(s)=p.
\end{array}
\right.
\end{equation}
\end{propriete}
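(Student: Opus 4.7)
The plan is to reduce the non-autonomous initial-value problem to an autonomous one on the enlarged manifold $\widetilde{M} := J \times M$ and then invoke the classical existence, uniqueness, and smooth-dependence theorem for flows of time-independent smooth vector fields.

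First I would define the vector field $W$ on $\widetilde{M}$ by $W_{(t,p)} := \partial_t|_t \oplus V(t,p)$, using the canonical splitting $T_{(t,p)}\widetilde{M} \cong T_tJ \oplus T_pM$; smoothness of $V$ gives smoothness of $W$. The autonomous Picard--Lindel\"of theorem, together with smooth dependence on initial data, then furnishes an open set $\widetilde{U} \subset \mathbb{R} \times \widetilde{M}$ containing $\{0\} \times \widetilde{M}$ and a smooth map $\Phi : \widetilde{U} \to \widetilde{M}$ such that, for each $\widetilde{p} = (s,p)$, the curve $\tau \mapsto \Phi(\tau, \widetilde{p})$ is the unique maximal integral curve of $W$ through $\widetilde{p}$ at $\tau = 0$. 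Since the $J$-component of $W$ is $\partial_t$, the flow decomposes as $\Phi(\tau, (s,p)) = (s+\tau, \psi(\tau, s, p))$ for a smooth $M$-valued map $\psi$.

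Next I would set $\phi(t,s,p) := \psi(t-s, s, p)$ and $U := \{(t,s,p) \in J \times J \times M : (t-s, s, p) \in \widetilde{U}\}$. Then $U$ is open as the preimage of an open set under the smooth change of variables $(t,s,p) \mapsto (t-s,s,p)$, it contains the diagonal $\{(s,s,p) : s \in J,\, p \in M\}$, and $\phi$ is smooth. A direct differentiation, using the defining ODE for $\Phi$, shows that $\gamma(t) := \phi(t,s,p)$ satisfies $\dot\gamma(t) = V(t,\gamma(t))$ and $\gamma(s) = p$. For maximality and uniqueness, given any solution $\widetilde\gamma : I \to M$ of the IVP on an interval $I \ni s$, the lifted curve $t \mapsto (t, \widetilde\gamma(t))$ is an integral curve of $W$ starting at $(s,p)$, so by uniqueness for autonomous flows it coincides with $\Phi(\,\cdot - s,\,(s,p))$ on their common domain; hence $I \subset U^{(s,p)}$ and $\widetilde\gamma = \gamma$ there.

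The only real subtlety is the bookkeeping in the reparametrization $\tau = t-s$ when transferring properties of the autonomous flow back to the non-autonomous one, and in verifying that $U^{(s,p)}$ inherits connectedness (hence is an interval, not merely an open set) from the maximal interval of existence for the integral curve of $W$. None of these steps presents a genuine obstacle: the statement is a classical consequence of autonomous ODE theory, which is precisely why the authors are content to cite it from Jost rather than reprove it here.
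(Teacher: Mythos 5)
Your argument is correct, but there is nothing in the paper to compare it against: Theorem~\ref{main10} is stated with a citation to Jost and no proof is given in the text. Your reduction of the non-autonomous problem to the autonomous flow of $W = \partial_t \oplus V$ on $J\times M$, followed by the reparametrization $\tau = t-s$, is the standard proof of this result and all the steps check out --- in particular, $U^{(s,p)}$ is the translate by $s$ of the maximal interval of existence of the integral curve of $W$ through $(s,p)$, hence an open interval containing $s$, and uniqueness follows by lifting any competing solution to an integral curve of $W$. This fills in a proof the authors deliberately omitted, and does so by exactly the route one would expect the cited reference to take.
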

\begin{rem}\label{remark1}
If $M$ is compact, then the sphere bundle $SM$ is compact as well. On the other hand the components of $\xi=\Delta_{\tilde{g}(t),h}Id=\tilde{g}^{mn}(x,y)(-\tilde{\Gamma}^{k}_{mn}+\bar{\Gamma}^{k}_{mn})\frac{\partial}{\partial x^k}\in \Gamma(\pi^{*}TM)$ are homogeneous of degree zero on $y$ and  $\xi$ can be considered as a vector field on $SM$. According to Lemma \ref{main9} and Theorem \ref{main10}, there exists a unique one-parameter family of diffeomorphisms $\{\tilde{\varphi}_{t}:SM\longrightarrow SM \}$, $0\leq t<T$, such that
\begin{equation} \label{45}
\left\{
\begin{array}{l}
\frac{\partial}{\partial t}\tilde{\varphi}_{t}(z)=\xi(\tilde{\varphi}_{t}(z),t),\cr
\varphi_{0}=Id_{SM},
\end{array}
\right.
\end{equation}
where $z=(x,[y])\in SM$.
\end{rem}
\begin{Lemma} \label{LEM1}
\cite{BY}Let $M$ be a differentiable manifold, $F$ a Finsler structure and $\tilde{\varphi}_{t}$ a family of diffeomorphisms on $TM$ generated by a vector field $\hat{V}$. Then the pull back of $F$ under the point transformation i.e., $\tilde{\varphi}_{t}^{*}(F)$ is also a Finsler structure on $TM$. Moreover, if $\mathcal{R}ic_{F}$ is the Ricci scalar related to the Finsler structure $F$, then we have
\begin{eqnarray*}
\tilde{\varphi}_{t}^{*}(\mathcal{R}ic_{F})=\mathcal{R}ic_{\tilde{\varphi}_{t}^{*}(F)}.\nonumber
\end{eqnarray*}
\end{Lemma}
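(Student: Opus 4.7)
The plan is to verify the three defining conditions of a Finsler structure for $\tilde{\varphi}_t^*(F) := F \circ \tilde{\varphi}_t$, and then exploit the naturality of the spray (and hence of $\mathcal{R}ic$) under a point transformation on $TM$. Throughout, I will use the fact (implicit in the applications to Remark \ref{remark1}) that $\hat V$ is the canonical/complete lift of a vector field on $M$, so its flow $\tilde\varphi_t$ is a point transformation of $TM$ covering a diffeomorphism $\varphi_t$ of $M$; on each fibre $\tilde\varphi_t$ is the linear isomorphism $d\varphi_t$, and in particular $\tilde\varphi_t(TM_0)=TM_0$.

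For the first assertion, smoothness of $F\circ\tilde\varphi_t$ on $TM_0$ is immediate since both factors are smooth and $TM_0$ is preserved. Positive homogeneity of degree one in $y$ follows from the linearity of $d\varphi_t$ on fibres: for $\lambda>0$ one computes $(\tilde\varphi_t^*F)(x,\lambda y)=F(\varphi_t(x),\lambda\,d\varphi_t(y))=\lambda(\tilde\varphi_t^*F)(x,y)$. For positive definiteness of the Hessian of $(\tilde\varphi_t^*F)^2$ in $y$, I would differentiate twice in $y$ using the chain rule to obtain
\[
\tilde g_{ij}(x,y)=\frac{\partial\varphi_t^{a}}{\partial x^{i}}\,\frac{\partial\varphi_t^{b}}{\partial x^{j}}\;g_{ab}(\varphi_t(x),d\varphi_t(y)),
\]
so that $\tilde g$ is the pullback tensor of the fibrewise inner product $g$ by the linear isomorphism $d\varphi_t$, and therefore positive definite on $TM_0$.

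For the second assertion I would argue by naturality. The spray coefficients $G^i$ are constructed from $F^2$ alone by the formula recalled before (\ref{77}), so they are intrinsic invariants of $F$. Under a point transformation the spray transforms tensorially as a semi-spray: a direct chain-rule computation (or a one-line appeal to the fact that geodesics are diffeomorphism invariants, combined with the characterization of $G^i$ via the geodesic equation $\ddot c^i+2G^i(c,\dot c)=0$) shows
\[
\tilde G^{i}(x,y)=(d\varphi_t)^{-1\,i}_{\ a}\,G^{a}(\varphi_t(x),d\varphi_t(y))\;+\;(\text{inhomogeneous piece from }\tfrac{\partial^2\varphi_t}{\partial x\partial x}),
\]
where the inhomogeneous piece is annihilated when one forms the reduced curvature $R^{i}_{k}$ because (\ref{18}) only involves combinations of $\partial G^{i}$ that transform tensorially. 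Substituting this into (\ref{18}) I obtain $\tilde R^{i}_{k}=(d\varphi_t)^{-1\,i}_{\ a}(d\varphi_t)^{b}_{\ k}\,R^{a}_{b}\circ\tilde\varphi_t$, and tracing over $i=k$ yields $\widetilde{\mathcal Ric}(x,y)=\mathcal Ric(\tilde\varphi_t(x,y))$, which is exactly the claimed identity $\tilde\varphi_t^*(\mathcal Ric_F)=\mathcal Ric_{\tilde\varphi_t^*F}$.

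The main obstacle is the careful chain-rule bookkeeping for $\tilde G^{i}$: one must verify that the inhomogeneous ``connection-like'' contribution coming from $\partial^{2}\varphi_t/\partial x^p\partial x^q$ drops out of the combination defining $R^{i}_{k}$ in (\ref{18}), exactly as the analogous term drops out of the Riemannian curvature under a coordinate change. Once this is established, taking the vertical trace to form $\mathcal Ric=R^{i}_{\ i}/? $ (i.e.\ the scalar in (\ref{19})) and using the zero-homogeneity of $\mathcal Ric$ finishes the proof; the descent to $SM$ needed for the applications in Remark \ref{remark1} is then automatic.
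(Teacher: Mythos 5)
First, note that the paper itself offers no proof of this lemma: it is quoted from \cite{BY}, so there is no internal argument to compare yours against line by line; I can only judge your reconstruction on its own terms.

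Your first half is correct and complete, and you correctly identified the one hypothesis that makes the statement work: $\hat V$ must be the complete lift of a vector field on $M$, so that $\tilde{\varphi}_t(x,y)=(\varphi_t(x),d\varphi_t(y))$ is a genuine point transformation, fibrewise linear and preserving $TM_0$. Without that, homogeneity of $F\circ\tilde{\varphi}_t$ would fail, so this reading is forced by the phrase ``point transformation'' in the statement. Smoothness, homogeneity, and the congruence formula $\tilde g_{ij}=\frac{\partial\varphi_t^a}{\partial x^i}\frac{\partial\varphi_t^b}{\partial x^j}\,g_{ab}\circ\tilde{\varphi}_t$ giving positive definiteness are all as you say.

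In the second half there is a gap, and you name it yourself: you assert, but do not verify, that the inhomogeneous $\partial^2\varphi_t/\partial x^p\partial x^q$ contribution to $\tilde G^i$ cancels in the combination (\ref{18}) defining $R^i_k$. As written, the proof is not finished at that point. However, the gap is closable without any bookkeeping, by a cleaner route that you already gesture at: pulling back $F$ by a point transformation is the same as computing the geometry of $F$ at $(\varphi_t(x),d\varphi_t(y))$ in the chart $x^i\circ\varphi_t^{-1}$. Since $G^i$, and hence $R^i_k$ via (\ref{18}), are built from $F^2$ by universal coordinate formulas, and since $\mathcal{R}ic=R^i_i$ is a trace and therefore a coordinate-independent scalar on $TM_0$, one gets $\mathcal{R}ic_{\tilde{\varphi}_t^*F}(x,y)=\mathcal{R}ic_F(\tilde{\varphi}_t(x,y))$ immediately, with no need to track the transformation law of the spray or to prove tensoriality of $R^i_k$. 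I would replace the chain-rule computation by this naturality argument; it also removes the uncertainty you left in the normalization of $\mathcal{R}ic$ (in this paper $\mathcal{R}ic=R^i_i$ with the $1/F^2$ already absorbed into (\ref{18}), and the normalization is irrelevant to the identity anyway). One further caution for the way the lemma is used later: the vector field $\xi$ of Remark \ref{remark1} has $y$-dependent components, so its flow is not a point transformation in the sense your proof requires; that tension lies in the paper's application of the lemma, not in your proof of the lemma as stated.
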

Now we are in a position to prove the following proposition.
\begin{prop} \label{main11}
Fix a compact Finsler manifold $(M,F)$ with related Finsler metric tensor $h$. Let $\tilde{F}(t)$, for $t\in[0,T)$, be a one-parameter family of Finsler structures on $TM$ which satisfies the following Finslerian Ricci-DeTurck flow
\begin{eqnarray} \label{46}
\frac{\partial}{\partial t}\tilde{F}^{2}(t)=-2\tilde{F}^{2}(t)\mathcal{R}ic(\tilde{F}(t))-\mathcal{L}_{\xi}\tilde{F}^{2}(t),
\end{eqnarray}
where, $\xi=\Delta_{\tilde{g}(t),h}Id$. Moreover let $\tilde{\varphi}_{t}$, for $t\in[0,T)$ be a one-parameter family of diffeomorphisms satisfying
\begin{eqnarray*} %\label{47}
\frac{\partial}{\partial t}\tilde{\varphi}_{t}(x,y)=\xi\mid_{\varphi_{t}(x)}=(\Delta_{\tilde{g}(t),h}Id)\mid_{\varphi_{t}(x)},\nonumber
\end{eqnarray*}
for all points $x\in M$, $y\in S_{x}M$ and all $t\in[0,T)$. Then the Finsler structures $F(t)$, $t\in[0,T)$, form a solution to the Finslerian Ricci-flow (\ref{20}), where $F(t)$ is defined by
\begin{eqnarray*} %\label{48}
F^{2}(t)=\tilde{\varphi}_{t}^{*}(\tilde{F}^{2}(t)).\nonumber
\end{eqnarray*}
\end{prop}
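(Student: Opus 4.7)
The plan is a standard DeTurck-type pullback argument, adapted to the Finslerian setting using Lemma A.3 (Lemma \ref{LEM1}) to handle the Ricci scalar under pullback by a diffeomorphism on $TM$. I differentiate $F^2(t)=\tilde{\varphi}_t^*(\tilde{F}^2(t))$ in $t$ and show the Lie-derivative correction term introduced by DeTurck cancels against the term coming from the motion of $\tilde{\varphi}_t$, leaving precisely the scalar Finslerian Ricci flow (\ref{20}).

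First, I would verify well-definedness: by Remark \ref{remark1}, $\xi$ descends to a vector field on $SM$, and by Lemma \ref{main9} / Theorem \ref{main10} the one-parameter family $\tilde{\varphi}_t$ exists on $[0,T)$ and consists of diffeomorphisms. By Lemma \ref{LEM1}, $F^2(t):=\tilde{\varphi}_t^*(\tilde{F}^2(t))$ is again a Finsler structure on $TM$ for each $t$, so the quantity we wish to control is a bona fide Finsler structure. At $t=0$, since $\tilde{\varphi}_0=\mathrm{Id}$, we have $F(0)=\tilde{F}(0)=F_0$, matching the required initial condition.

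Next, the core computation. I would differentiate $F^2(t)=\tilde{\varphi}_t^*(\tilde{F}^2(t))$ using the general identity for the $t$-derivative of a time-dependent pullback by the flow of a time-dependent vector field:
\begin{equation*}
\frac{\partial}{\partial t}F^2(t)=\tilde{\varphi}_t^*\!\left(\frac{\partial \tilde{F}^2(t)}{\partial t}\right)+\tilde{\varphi}_t^*\!\left(\mathcal{L}_\xi \tilde{F}^2(t)\right),
\end{equation*}
where the second term comes from $\partial_t\tilde{\varphi}_t=\xi\circ\tilde{\varphi}_t$ together with the definition of the Lie derivative. Substituting the Ricci-DeTurck flow (\ref{46}) into the first term yields
\begin{equation*}
\frac{\partial}{\partial t}F^2(t)=\tilde{\varphi}_t^*\!\left(-2\tilde{F}^2(t)\,\mathcal{R}ic(\tilde{F}(t))-\mathcal{L}_\xi \tilde{F}^2(t)\right)+\tilde{\varphi}_t^*\!\left(\mathcal{L}_\xi \tilde{F}^2(t)\right),
\end{equation*}
so the two Lie derivative contributions cancel, leaving $\partial_t F^2(t)=-2\,\tilde{\varphi}_t^*\!\left(\tilde{F}^2(t)\,\mathcal{R}ic(\tilde{F}(t))\right)$. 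Using naturality of the pullback on scalar functions together with Lemma \ref{LEM1}, which gives $\tilde{\varphi}_t^*(\mathcal{R}ic_{\tilde{F}(t)})=\mathcal{R}ic_{\tilde{\varphi}_t^*\tilde{F}(t)}=\mathcal{R}ic_{F(t)}$, I obtain
\begin{equation*}
\frac{\partial}{\partial t}F^2(t)=-2F^2(t)\,\mathcal{R}ic(F(t)),
\end{equation*}
which, upon dividing by $2F^2(t)$, is exactly the scalar Finslerian Ricci flow $\partial_t \log F(t)=-\mathcal{R}ic(F(t))$.

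The step I expect to require the most care is justifying the pullback-derivative identity in the Finslerian category: one must check that the flow of $\xi$, which is a well-defined vector field on $SM$ but whose natural lift to act on $\tilde{F}^2$ uses the zero-homogeneity of its components, interacts with the tangent-bundle-level Lie derivative in the expected way so that the standard formula $\partial_t(\tilde{\varphi}_t^*\alpha_t)=\tilde{\varphi}_t^*(\partial_t\alpha_t+\mathcal{L}_\xi\alpha_t)$ applies to $\alpha_t=\tilde{F}^2(t)$. This is precisely the content that Remark \ref{remark1} and Lemma \ref{LEM1} together provide: the vector field is well-defined on $SM$ and its flow pulls Finsler structures back to Finsler structures, while the Ricci scalar transforms covariantly, so the cancellation is legitimate.
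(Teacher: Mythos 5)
Your proposal is correct and follows essentially the same route as the paper: differentiate $F^2(t)=\tilde{\varphi}_t^*(\tilde{F}^2(t))$, cancel the Lie-derivative correction term against the DeTurck term in (\ref{46}), and invoke Lemma \ref{LEM1} to identify $\tilde{\varphi}_t^*(\mathcal{R}ic(\tilde{F}(t)))$ with $\mathcal{R}ic_{F(t)}$. The only cosmetic difference is that the paper writes the flow correction as $\mathcal{L}_{(\tilde{\varphi}_t^{-1})_*(\xi)}\tilde{\varphi}_t^*(\tilde{F}^2(t))$ rather than your $\tilde{\varphi}_t^*(\mathcal{L}_\xi\tilde{F}^2(t))$, which agree by naturality of the Lie derivative under pullback.
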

\begin{proof}
We are going to show $\frac{\partial}{\partial t}(\log F(t))=-\mathcal{R}ic.$
Derivation of $F^{2}(t)=\tilde{\varphi}_{t}^{*}(\tilde{F}^{2}(t))$ with respect to the parameter $t$, leads to
\begin{equation} \label{50}
\frac{\partial}{\partial t}(\log F(t))=\frac{1}{2}\frac{\frac{\partial}{\partial t}(\tilde{\varphi}_{t}^{*}(\tilde{F}^{2}(t)))}{\tilde{\varphi}_{t}^{*}(\tilde{F}^{2}(t))}.
\end{equation}
The term $\frac{\partial}{\partial t}(\tilde{\varphi}_{t}^{*}\tilde{F}^{2}(t))$ is
\begin{eqnarray} \label{51}
\frac{\partial}{\partial t}(\tilde{\varphi}_{t}^{*}\tilde{F}^{2}(t))&=&\frac{\partial}{\partial s}(\tilde{\varphi}_{s+t}^{*}(\tilde{F}^{2}(s+t)))\mid_{s=0}\nonumber \\
%&=&\tilde{\varphi}_{t}^{*}(\frac{\partial}{\partial t}\tilde{F}^{2}(t))+\frac{\partial}{\partial s}(\tilde{\varphi}_{s+t}^{*}(\tilde{F}^{2}(t)))\mid_{s=0}\nonumber \\
%&=&\tilde{\varphi}_{t}^{*}(\frac{\partial}{\partial t}\tilde{F}^{2}(t))+\frac{\partial}{\partial s}((\tilde{\varphi}_{t}^{-1}\circ\hspace{0.1cm}\tilde{\varphi}_{t+s})^{*}(\tilde{\varphi}_{t}^{*}(\tilde{F}^{2}(t))))\mid_{s=0}\nonumber\\
&=&\tilde{\varphi}_{t}^{*}(\frac{\partial}{\partial t}\tilde{F}^{2}(t))+\mathcal{L}_{\frac{\partial}{\partial s}(\tilde{\varphi}_{t}^{-1}\circ\hspace{0.1cm}\tilde{\varphi}_{t+s})\mid_{s=0}}\tilde{\varphi}_{t}^{*}(\tilde{F}^{2}(t)).
\end{eqnarray}
Hence, (\ref{51}) is written
\begin{eqnarray*}%\label{53}
\frac{\partial}{\partial t}(\tilde{\varphi}_{t}^{*}\tilde{F}^{2}(t))=\tilde{\varphi}_{t}^{*}(\frac{\partial}{\partial t}\tilde{F}^{2}(t))+\mathcal{L}_{(\tilde{\varphi}_{t}^{-1})_{*}(\xi)}\tilde{\varphi}_{t}^{*}(\tilde{F}^{2}(t)).
\end{eqnarray*}
Replacing the last relation in (\ref{50}) and by assumption (\ref{46}) we get
\begin{eqnarray*}%\label{54}
\frac{\partial}{\partial t}(\log F(t))&=&\frac{1}{2}\frac{\tilde{\varphi}_{t}^{*}(\frac{\partial}{\partial t}\tilde{F}^{2}(t))+\mathcal{L}_{(\tilde{\varphi}_{t}^{-1})_{*}(\xi)}
\tilde{\varphi}_{t}^{*}(\tilde{F}^{2}(t))}{\tilde{\varphi}_{t}^{*}(\tilde{F}^{2}(t))} \\
%&=&\frac{1}{2}\frac{\tilde{\varphi}_{t}^{*}(-2\tilde{F}^{2}(t)\mathcal{R}ic(\tilde{F}(t))-\mathcal{L}_{\xi}\tilde{F}^{2}(t))+
%\mathcal{L}_{(\tilde{\varphi}_{t}^{-1})_{*}(\xi)}\tilde{\varphi}_{t}^{*}(\tilde{F}^{2}(t))}{\tilde{\varphi}_{t}^{*}(\tilde{F}^{2}(t))}\\
%&=&\frac{1}{2}\frac{\tilde{\varphi}_{t}^{*}(-2\tilde{F}^{2}(t)\mathcal{R}ic(\tilde{F}(t)))-\tilde{\varphi}_{t}^{*}(\mathcal{L}_{\xi}\tilde{F}^{2}(t)))
%+\mathcal{L}_{(\tilde{\varphi}_{t}^{-1})_{*}(\xi)}\tilde{\varphi}_{t}^{*}(\tilde{F}^{2}(t))}{\tilde{\varphi}_{t}^{*}(\tilde{F}^{2}(t))}\\
&=&\frac{1}{2}\frac{-2\tilde{\varphi}_{t}^{*}(\tilde{F}^{2}(t))\tilde{\varphi}_{t}^{*}(\mathcal{R}ic(\tilde{F}(t)))}{\tilde{\varphi}_{t}^{*}(\tilde{F}^{2}(t))}.
\end{eqnarray*}
By virtue of Lemma \ref{LEM1} we have
\begin{eqnarray*}
\frac{\partial}{\partial t}(\log F(t))
=-\tilde{\varphi}_{t}^{*}(\mathcal{R}ic(\tilde{F}(t)))
=-\mathcal{R}ic_{\tilde{\varphi}_{t}^{*}(\tilde{F}(t))}
=-\mathcal{R}ic_{F(t)}.
\end{eqnarray*}
Therefore, the Finsler structures $F(t)$ form a solution to the Finslerian Ricci flow. This completes the proof.
\end{proof}
Now we are in a position to prove our last theorem, influenced by the proof in Riemannian case, see \cite{BRE}
\begin{them} \label{main14}
Let $M$ be a compact differentiable manifold with a fixed background Finsler structure $\bar{F}$ on $TM_{0}$ and related Finsler metric $h$. Given any initial Finsler structure $F_{0}$ with metric tensor $g_{0}$, there exists a real number $T>0$ and a smooth one-parameter family of Finsler structures $F(t)$, $t\in[0,T)$, with metric tensors $g(t)$, such that $F(t)$ is a solution to the Finslerian Ricci flow and $F(0)=F_{0}$.
\end{them}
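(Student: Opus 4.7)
The plan is to assemble the pieces already built up in the paper: the short-time existence of the Ricci--DeTurck flow from Theorem \ref{main8}, the flow-pullback procedure from Proposition \ref{main11}, and the standard ODE-existence results (Lemma \ref{main9} and Theorem \ref{main10}) that produce the required diffeomorphisms. The strategy is simply to run Ricci--DeTurck, integrate the DeTurck vector field to get diffeomorphisms of $SM$, and then pull the Ricci--DeTurck solution back to recover a genuine Ricci flow solution with the prescribed initial data.

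\textbf{Step 1: Solve the Ricci--DeTurck flow.} Starting from the initial Finsler structure $F_{0}$ with metric tensor $g_{0}$ and the fixed background metric $h$ arising from $\bar{F}$, apply Theorem \ref{main8} to obtain $T_{1}>0$ and a smooth one-parameter family of Finsler structures $\tilde{F}(t)$ on $TM$, $t\in[0,T_{1})$, with metric tensors $\tilde{g}(t)$, satisfying
\[
\frac{\partial}{\partial t}\tilde{F}^{2}(t)=-2\tilde{F}^{2}(t)\mathcal{R}ic(\tilde{g}(t))-\mathcal{L}_{\xi}\tilde{F}^{2}(t),\qquad \tilde{F}(0)=F_{0},
\]
where $\xi=\tilde{g}^{mn}(-\tilde{\Gamma}^{k}_{mn}+\bar{\Gamma}^{k}_{mn})\tfrac{\partial}{\partial x^{k}}$ is the DeTurck vector field associated to $(\tilde{g}(t),h)$. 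As noted after Corollary \ref{main3} and in Remark \ref{remark1}, $\xi$ is $0$-homogeneous in $y$ and so descends to a well-defined smooth time-dependent vector field on $SM$.

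\textbf{Step 2: Integrate the DeTurck vector field.} Because $M$, and hence $SM$, is compact, Lemma \ref{main9} together with Theorem \ref{main10} applied to the smooth time-dependent vector field $\xi$ on $SM$ yields a one-parameter family of diffeomorphisms $\tilde{\varphi}_{t}:SM\longrightarrow SM$ defined on the full interval $[0,T_{1})$, with
\[
\frac{\partial}{\partial t}\tilde{\varphi}_{t}(z)=\xi\bigl(\tilde{\varphi}_{t}(z),t\bigr),\qquad \tilde{\varphi}_{0}=Id_{SM}.
\]
Set $T=T_{1}$.

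\textbf{Step 3: Pull back and verify.} Define the one-parameter family of Finsler structures by
\[
F^{2}(t):=\tilde{\varphi}_{t}^{*}\bigl(\tilde{F}^{2}(t)\bigr),\qquad t\in[0,T).
\]
By Lemma \ref{LEM1}, each $F(t)$ is indeed a Finsler structure on $TM$, and by Proposition \ref{main11} the family $F(t)$ solves the Finslerian Ricci flow $\frac{\partial}{\partial t}\log F(t)=-\mathcal{R}ic_{F(t)}$ throughout $[0,T)$. The initial condition follows directly from $\tilde{\varphi}_{0}=Id$ and $\tilde{F}(0)=F_{0}$, giving $F(0)=F_{0}$. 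Smoothness in $(x,y,t)$ is inherited from the joint smoothness of $\tilde{F}(t)$ (from Theorem \ref{main8}) and of $\tilde{\varphi}_{t}$ (from the smooth dependence of ODE solutions on the parameter in Theorem \ref{main10}).

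\textbf{Main obstacle.} The substantive analytic work has already been absorbed into Theorem \ref{main8}, so the only delicate point that remains here is the compatibility of the pullback construction with the Finslerian setting: one must be sure that $\xi$, a priori defined as a section of $\pi^{*}TM$ over $TM_{0}$, can be legitimately integrated as a vector field on the compact manifold $SM$ and that the resulting diffeomorphisms $\tilde{\varphi}_{t}$ interact properly with $\tilde{F}^{2}$ regarded as a function on $TM$ to produce a bona fide Finsler structure. This is exactly what the $0$-homogeneity of $\xi$ in $y$ and Lemma \ref{LEM1} guarantee, so the theorem follows by collecting Steps 1--3.
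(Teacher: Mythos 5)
Your proposal is correct and follows essentially the same route as the paper's own proof: invoke Theorem \ref{main8} for the Ricci--DeTurck solution, integrate the DeTurck vector field $\xi$ on the compact sphere bundle $SM$ via Lemma \ref{main9} and Theorem \ref{main10} (as in Remark \ref{remark1}), and pull back through Proposition \ref{main11} to obtain $F^{2}(t)=\tilde{\varphi}_{t}^{*}(\tilde{F}^{2}(t))$ with $F(0)=F_{0}$. Your version merely spells out the homogeneity and smoothness points that the paper leaves implicit.
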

\begin{proof}
It is easy to check the existence statement. In fact by means of Theorem \ref{main8}, there exists a solution $\tilde{F}(t)$ to the Finslerian Ricci-DeTurck flow (\ref{22}) which is defined on some time interval $[0,T)$ and satisfies $\tilde{F}(0)=F_{0}$. For each point $z=(x,[y])\in SM$, we denote by $\tilde{\varphi}_{t}(z)$ the solution of the PDE
\begin{eqnarray*} %\label{65}
\frac{\partial}{\partial t}\tilde{\varphi}_{t}(x,y)=(\Delta_{\tilde{g}(t),h}Id)\mid_{\varphi_{t}(x)}=\xi\mid_{\varphi_{t}(x)},\nonumber
\end{eqnarray*}
with the initial condition $\tilde{\varphi}_{0}(z)=z$. By Proposition \ref{main11}, the Finsler structures $F^{2}(t)=\tilde{\varphi}^{*}_{t}(\tilde{F}^{2}(t))$, $t\in[0,T)$, form a solution to the Finslerian Ricci flow (\ref{20}) with $F(0)=F_{0}$. This completes the proof.\\
\end{proof}

\bibliography{mybibfile}
%%%%%%%%%%%%%%%%%%%%%%%%%%%
{\small \it  Behroz Bidabad, bidabad@aut.ac.ir}\\
{\small \it Maral Khadem Sedaghat,
m\_sedaghat@aut.ac.ir \\ Faculty of Mathematics and Computer Science, Amirkabir University of Technology (Tehran Polytechnic), Hafez Ave., 15914 Tehran, Iran.}

\end{document}